\documentclass[11pt]{article}

\usepackage{geometry}
\usepackage{braket}
\usepackage{graphicx,graphics}
\usepackage{amsmath,amssymb,latexsym,lscape,amsthm}
\usepackage{xcolor}
\usepackage{lineno,cite}
\usepackage[unicode=true,pdfusetitle,
bookmarks=true,bookmarksnumbered=false,bookmarksopen=false,breaklinks=false,pdfborder={0 0 0},backref=false,colorlinks=false]
{hyperref}
\theoremstyle{plain}

\newtheorem{thm}{Theorem}
\newtheorem{prop}{Proposition}
\newtheorem{lemma}{Lemma}
\newtheorem{cor}{Corollary}
\theoremstyle{definition}

\newtheorem{remark}{Remark}
\usepackage{tikz}
\usepackage{adjustbox}
\usepackage{accents}
\usepackage{enumerate}
\usepackage[shortlabels]{enumitem}
\setlist[enumerate,1]{label={($\roman*$)}}
\usepackage{comment}
\usepackage{appendix}
\usepackage{booktabs,tabularx}
\usepackage{subfig}
\usepackage{comment}
\usepackage{cleveref}
\usepackage{lipsum}
\newcommand{\Rzero}{\mathcal{R}_0}
\newcommand{\Rc}{\mathcal{R}_c}
\usepackage{float}
\definecolor{blue}{rgb}{0.0, 0.0, 1.0}
\definecolor{green}{rgb}{0.0, 0.5, 0.0}
\hypersetup{colorlinks,linkcolor={blue},citecolor={green},urlcolor={blue}}
\newcommand{\R}{\mathbb{R}}

\renewcommand{\arraystretch}{1.15}
\newcommand{\DeclareMathOp}[1]{\csgdef{#1}{\operatorname{#1}}}
\newcommand{\DeclareMathOps}[1]{\forcsvlist{\DeclareMathOp}{#1}}
\DeclareMathOps{ diam, dist, Lip, tr, det, Ran,}

\begin{document}

\begin{center}
    {\Large \bf
    {Global stability and uniform persistence in an epidemic model with saturating fomite-mediated transmission}}
\end{center}

\begin{center}
    Emanuela Penitente\footnote{emanuela.penitente@unina.it},
    Urszula Fory\'s\footnote{urszula@mimuw.edu.pl},
    Burcu G\"urb\"uz\footnote{burcu.gurbuz@uni-mainz.de ($^*$Corresponding author)}$^{,4*}$
\end{center}

\begin{center}
    $^{1}$Department of Mathematics and Applications,
    University of Naples Federico II,\\ via Cintia 26, I--80126,
    Naples, Italy\\
    $^{2}$Institute of Applied Mathematics and Mechanics, University of Warsaw, \\Banacha 2, 02--097, Warsaw, Poland\\

    $^{3}$Institut für Mathematik, Johannes Gutenberg-Universität,\\
    Staudingerweg 9, 55099, Mainz, Germany\\

    $^{4}$Institute for Quantitative and Computational Biosciences (IQCB), Johannes Gutenberg University Mainz, 55128 Mainz, Germany\\
\end{center}

\begin{abstract}
\noindent
We analyse the global dynamics of a Susceptible--Vaccinated--Exposed--Infected--Recovered (SVEIR) epidemic model with demographic turnover, imperfect vaccination, and two transmission routes: direct host-to-host contagion and indirect transmission via contaminated fomites. Indirect transmission is described through an environmental pathogen concentration and a Holling-type dose--response function, accounting for nonlinear incidence at high contamination levels. Threshold conditions separating disease elimination from long-term persistence are expressed in terms of the control reproduction number $\mathcal R_c$, and the classical threshold condition $\mathcal R_c<1$ is derived for the local asymptotic stability of the disease-free equilibrium. For the Holling type~II case, we further obtain an explicit closed-form sufficient condition for the global asymptotic stability of the disease-free equilibrium by applying the Kamgang--Sallet approach for monotone systems with a Metzler infected subsystem. In the absence of vaccination, this criterion recovers the sharp threshold $\mathcal R_0\le 1$ for the global asymptotic stability of the disease-free equilibrium, where $\mathcal R_0$ denotes the basic reproduction number. Conversely, when $\mathcal R_c>1$, we establish uniform persistence of the infection and the existence of at least one endemic equilibrium using persistence theory for semiflows and an acyclicity analysis of the boundary dynamics. Overall, our results quantify the combined impact of vaccination and saturating fomite-mediated transmission on the global behaviour of the model.
\end{abstract}

\noindent {{\bf Keywords}:
mathematical epidemiology, environmental transmission, Holling-type incidence, global stability, uniform persistence.}
\normalsize

%%% \begin{linenumbers}
%% MSC (2020):
%% Primary: 34D23, 92D30
%% Secondary: 37C75, 34C60
%%% MSC-class: 34D23, 92D30, 37C75, 34C60
%%%%
%% arXiv subject class
%% Primary: math.DS (Dynamical Systems)
%% Secondary: math.AP (Analysis of PDEs/ODEs) OR q-bio.PE (Populations and Evolution)
%%%%%
%% Primary: math.DS
%% Secondary: q-bio.PE
%%%%
%% ACM-class
%% G.1.7 Ordinary Differential Equations
%% J.3 Life and Medical Sciences (Biology and genetics)
%%% ACM-class: G.1.7; J.3

\section{Introduction}
\label{sec:intro}
Infectious diseases continue to present a significant public health challenge worldwide, often spreading through a combination of direct person-to-person contagion and indirect exposure via the environment. In recent decades, it has repeatedly been shown that many infectious agents (e.g., respiratory syncytial virus, rhinovirus, norovirus or influenza virus) spread through multiple pathways, combining direct host-to-host contagion with indirect transmission mediated by the environment \cite{CDC_RSV_HowSpreads,atmar2010noroviruses,kraay2018fomite}. Indirect transmission may occur via contaminated water, aerosols, or inanimate objects (fomites), and it can substantially modify both short-term outbreak trajectories and long-term endemic behaviour. From an empirical standpoint, the potential relevance of contaminated surfaces has been discussed for several respiratory and enteric viruses. For example, Boone and Gerba \cite{boone2007significance}  have synthesised the literature on this topic, and Kampf et al. \cite{kampf2020persistence} have conducted an extensive review of the persistence and inactivation of coronaviruses on surfaces. While the quantitative contribution of fomites depends on pathogen survival, contact patterns, and hygiene practices, incorporating an explicit environmental component in transmission models is an established way to assess the epidemiological impact of environmental persistence and interventions.

A common modelling approach involves augmenting classical compartmental frameworks by introducing an additional variable, say $C(t)$, which tracks the concentration of pathogens in an environmental reservoir. This viewpoint has been developed for environmentally mediated transmission in different contexts; for instance, Li et al. \cite{li2009dynamics} proposed a mechanistic formulation linking deposition and pickup of pathogens in the environment to human infection dynamics, and Tien and Earn \cite{tien2010multiple} investigated how multiple transmission routes reshape threshold conditions and outbreak patterns in waterborne disease models. More generally, environmental persistence can create effective delays and feedback loops that blur the distinction between ``direct'' and ``indirect'' transmission when environmental decay is rapid. However, substantial persistence can also lead to qualitatively different dynamics~\cite{breban2013role}. For influenza-like infections, explicit fomite-mediated models have been analysed to quantify when the fomite route alone can sustain transmission and how it interacts with direct transmission~\cite{zhao2012model}.

An important modelling choice concerns the functional form linking the environmental pathogen concentration to the per--susceptible infection probability, often referred to as the \emph{dose--response} relationship \cite{brouwer2017dose}. While adopting a linear dose--response function is analytically convenient and is frequently adopted for mathematical tractability, it may overestimate infection pressure at high contamination levels, where behavioural responses, saturation of contact rates, and nonlinear dose--response effects become relevant. An alternative is provided by the Holling--type functional response, originally introduced in ecology \cite{holling1959components} and now widely used in epidemiological modelling to represent saturating transmission \cite{capasso1978generalization,buonomo2008global,chakroborty2026seirv}. This type of dose--response relationship interpolates between low-dose (approximately linear) exposure and high-dose saturation, and is particularly suitable when the pathogen concentration $C(t)$ represents an aggregate contamination level over many fomites, so that the marginal effect of additional contamination diminishes once most contacts already involve contaminated surfaces.

A Holling--type dose--response function has been adopted, among others, by G{\"o}k{\c{c}}e et al.~\cite{gokcce2024dynamics}, who propose an SVEIR (Susceptible--Vaccinated--Exposed--Infected--Recovered) framework in which vaccinated individuals may remain partially susceptible before acquiring effective immunity. Their formulation is both mathematically tractable and sufficiently general to accommodate different pathogens, while explicitly incorporating vaccination as an intervention, thereby enabling the assessment of how immunisation strategies modulate transmission potential in the presence of an environmental pathway, even when direct contacts are reduced. In their work, the authors perform a local stability and bifurcation analysis, showing that the qualitative behaviour of the model depends on the choice of the dose--response function: in some cases, backward bifurcation may occur, with the possibility of multiple endemic equilibria even when the basic reproduction number is below one. Although~\cite{gokcce2024dynamics} provides a thorough local and bifurcation analysis, complementary results on the global dynamics (e.g., global attractivity or persistence properties) are beyond the scope of that study. However, their findings motivate further investigation of the model's global dynamics, which would complement the local and bifurcation picture developed therein.

Establishing global conditions for disease extinction or persistence is of significant biological importance. In particular, global asymptotic stability of the disease-free equilibrium under vaccination implies, from a biological standpoint, that vaccine-driven elimination is guaranteed to be robust, regardless of the initial magnitude of infection in the population \cite{buonomo2008global}. Nevertheless, proving such results is typically challenging in models incorporating nonlinear incidence or dose–response mechanisms, as the dynamics may admit multiple equilibria and more complex behaviours, and the construction of suitable Lyapunov functions is often technically demanding \cite{ottaviano2022global,shuai2013global}.

Motivated by these considerations, we consider the SVEIR model with Holling-type dose--response proposed in~\cite{gokcce2024dynamics} and investigate its global dynamics, with a focus on establishing rigorous extinction and persistence disease conditions. From a mathematical perspective, the threshold analysis is typically organised around the control reproduction number $\mathcal{R}_c$ (or equivalently, around the basic reproduction number $\Rzero$ when containment interventions are not in place) \cite{van2002reproduction,gumel2004modelling}. In the Holling type~II case, we use the Kamgang-Sallet approach for decompositions with a Metzler infected subsystem and suitable dissipativity properties to obtain a global extinction condition for the disease--free state~\cite{kamgang2008computation}. Conversely, when $\mathcal{R}_c>1$, a key question is whether the infection persists uniformly away from the disease--free boundary. To address this, we adopt persistence theory for semiflows, using boundary dynamics and acyclicity arguments~\cite{thieme1993persistence}.

Altogether, our analysis sheds light on the global dynamical properties of the model and on the interplay between vaccination and saturating fomite-mediated transmission in shaping the long-term epidemic behaviour. The remainder of the paper is organised as follows. In Section \ref{sec: model and basic properties}, we introduce the model and establish its basic analytical properties, including the existence of a biologically feasible region. Section \ref{sec: extinction} is devoted to the analysis of the disease-free equilibrium and its stability properties. In Section \ref{sec: persistence}, we investigate the persistence of the disease and the existence of endemic equilibria. Finally, Section \ref{sec:conclusions} concludes the paper with a discussion of the results and perspectives for future work.

\section{The model and its basic properties}
\label{sec: model and basic properties}

\noindent
We consider an infectious disease transmitted via direct person--to--person contact and via contaminated inanimate objects (fomites), without restricting to any specific pathogen class (e.g., viral or bacterial). The host population is partitioned into five mutually exclusive compartments: susceptible ($S$), exposed ($E$), infectious ($I$), vaccinated ($V$), and recovered/immune individuals ($R$). Here, exposed individuals are infected but neither infectious nor symptomatic; vaccinated individuals have received at least one vaccine dose but have not yet developed full protective immunity; and recovered individuals are fully immune, having acquired permanent immunity either through prior infection or through vaccine-induced protection. The sizes of the five compartments at time $t$, denoted by $S(t)$, $E(t)$, $I(t)$, $V(t)$, and $R(t)$, are state variables of the model. The total population size is
\[
N(t)=S(t)+E(t)+I(t)+V(t)+R(t).
\]

   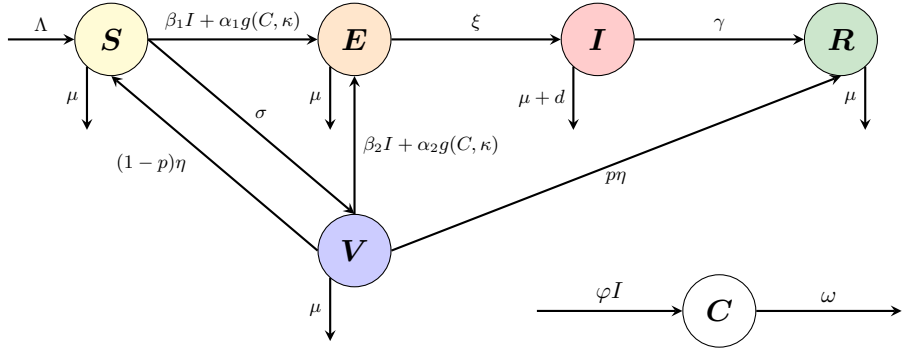
\begin{figure}
            \centering
            \tikzstyle{S} = [
                circle, minimum width=1.2cm, minimum height=1.2cm,
                text centered, draw=black,fill=yellow!20
            ]

            \tikzstyle{E} = [
                circle, minimum width=1.2cm, minimum height=1.2cm,
                text centered, draw=black,fill=orange!20
            ]

            \tikzstyle{I} = [
                circle, minimum width=1.2cm, minimum height=1.2cm,
                text centered, draw=black,fill=red!20
            ]

            \tikzstyle{V} = [
                circle, minimum width=1.2cm, minimum height=1.2cm,
                text centered, draw=black,fill=blue!20
            ]

            \tikzstyle{R} = [
                circle, minimum width=1.2cm, minimum height=1.2cm,
                text centered, draw=black,fill=green!20
            ]

            \tikzstyle{C} = [
                circle, minimum width=1.2cm, minimum height=1.2cm,
                text centered, draw=black,fill=white!20
            ]

            \tikzstyle{arrow} = [thick,->,>=stealth]
            \begin{tikzpicture}[node distance=2cm, scale=0.8, transform shape]
                \node (S) [S] {\Large $\boldsymbol{S}$};
                \node (E) [E, right of=S, xshift=2cm]{\Large $\boldsymbol{E}$};
                \node (I) [I, right of=E, xshift=2cm]{\Large $\boldsymbol{I}$};
                \node (R) [R, right of=I, xshift=2cm]{\Large $\boldsymbol{R}$};
                \node (V) [V, below of=E, yshift=-1.5cm]{\Large $\boldsymbol{V}$};
                \node (C) [C, right of=V, xshift=4cm, yshift=-1cm]{\Large $\boldsymbol{C}$};
                
                \draw [arrow] (S) -- node[anchor=south] {\footnotesize $\beta_1 I + \alpha_1g (C, \kappa)$} (E);
                \draw [arrow] (E) -- node[anchor=south] {\footnotesize $\xi $} (I);
                \draw [arrow] (I) -- node[anchor=south] {\footnotesize $\gamma $} (R);
                \draw [arrow] (V.west) -- node[anchor=east] {\footnotesize $(1- p) \eta \quad$} (S.south);
                \draw [arrow] (S.east) -- node[anchor=south] {\footnotesize $\quad \sigma $} (V.north);
                \draw [arrow] (V.north) -- node[anchor=west] {\footnotesize $\beta_2 I + \alpha_2g (C, \kappa)$} (E.south);
                \draw [arrow] (V.east) -- node[anchor=north] {\footnotesize $p \eta $} (R.south);
                \draw [arrow] (-0.4,-0.45) -- node[anchor=east] {\footnotesize $\mu $} (-0.4,-1.5);
                \draw [arrow] (-1.7, 0) -- node[anchor=south] {\footnotesize $\Lambda$} (S.west);
                \draw [arrow] (3.6,-0.45) -- node[anchor=east] {\footnotesize $\mu $} (3.6,-1.5);
                \draw [arrow] (7.6,-0.45) -- node[anchor=east] {\footnotesize $\mu + d$} (7.6,-1.5);
                 \draw [arrow] (12.4,-0.45) -- node[anchor=east] {\footnotesize $\mu $} (12.4,-1.5);
                 \draw [arrow] (3.6,-3.95) -- node[anchor=east] {\footnotesize $\mu $} (3.6,-5.0);
                 \draw [arrow] (7,-4.5) -- node[anchor=south] {$\varphi I$} (C);
                \draw [arrow] (C) -- node[anchor=south] {$\omega$} (13,-4.5);
            \end{tikzpicture}
            \caption{Flowchart of the model.}
            \label{fig:flowchart}
        \end{figure}

\noindent
The model, described in detail in G\"ok\c{c}e \emph{et al.}~\cite{gokcce2024dynamics}, incorporates demographic turnover through a constant recruitment rate $\Lambda$ into the susceptible class and a per-capita natural death rate $\mu$ acting on all compartments. Transmission operates through two routes: $(i)$ direct contact with infectious individuals and $(ii)$ exposure to the pathogen present in the environment (fomites). Both susceptible and vaccinated individuals are at risk of infection, since the latter have not yet developed full protection. For direct host-to-host transmission, susceptibles acquire infection at per-capita transmission rate $\beta_1$, whereas vaccinated individuals become infected at a reduced transmission rate $\beta_2$.

To represent infection caused by fomites, we introduce an additional state variable $C(t)$, representing the concentration of pathogen in the environment. The impact of environmental contamination on transmission is modelled by the continuous, bounded dose--response function
\begin{equation}\label{eq:C(t)}
  g(C,\kappa)=\frac{C^{\,n}}{C^{\,n}+\kappa^{\,n}}\,, \qquad n\in\mathbb{N},
\end{equation}
\noindent
where $\kappa>0$ is a half--saturation constant and $n$ is fixed. For $n=1$, $g$ reduces to the Holling type~II response, while for $n\ge 2$ it corresponds to a Holling type~III response. In particular, $g(0,\kappa)=0$ and $\lim_{C\to\infty} g(C,\kappa)=1$, so that the environmental contribution saturates at high contamination levels. Moreover,
\[
g'(0,\kappa)=
\begin{cases}
\kappa^{-1}, & n=1,\\[1mm]
0, & n\ge 2,
\end{cases}
\]
which shows that environmental transmission contributes linearly at low contamination levels only in the case $n=1$, whereas for $n\ge 2$ it is of higher order near the origin. This distinction explains why the environmental pathway enters the reproduction numbers only for $n=1$ and may lead to qualitatively different global dynamics, including backward bifurcation for $n=2$, as shown in \cite{gokcce2024dynamics}.

The outflows from the susceptible and vaccinated compartments due to environmental transmission are $\alpha_1 S\,g(C,\kappa)$ and $\alpha_2 V\,g(C,\kappa)$, respectively, where $\alpha_1$ and $\alpha_2$ are the corresponding per-capita transmission coefficients. After exposure, individuals enter the exposed class and progress to the infectious class at a rate $\xi$ (the inverse of the mean latent period). Infectious individuals recover at a rate $\gamma$ or die from the disease at a rate $d$. Vaccination occurs at a rate $\sigma$, moving susceptibles into $V$. Individuals leave the vaccinated compartment at a rate $\eta$ (the inverse of the mean time to develop full immune protection). Upon leaving the compartment $V$, a proportion $p\in(0,1)$ achieves full protection and moves to $R$, while the remaining proportion $1-p$ returns to $S$, since protection fails. We refer to $p$ as the vaccine effectiveness factor.

\begin{table}
    \centering
    \footnotesize
    \renewcommand{\arraystretch}{1}
    \begin{tabular}{
        >{\centering\arraybackslash}p{2cm}
        >{\raggedright\arraybackslash}p{11cm}
    }
        \toprule
        \textbf{Parameter} & \textbf{Description} \\
        \midrule
        $\Lambda$ & Recruitment rate into the susceptible class \\
        $\mu$ & Natural mortality rate\\
        $\beta_1$ & Transmission rate for susceptible individuals through contact with infectious hosts \\
        $\beta_2$ & Transmission rate for vaccinated (not yet fully protected) individuals through contact with infectious hosts \\
        $\alpha_{1}$ & Transmission rate of the pathogen from the environment to susceptible individuals \\
        $\alpha_{2}$ & Transmission rate of the pathogen from the environment to vaccinated individuals \\
        $\gamma$ & Recovery rate  \\
        $d$ & Disease-induced mortality rate \\
        $\xi$ & Progression rate from exposed to infectious \\
        $\sigma$ & Vaccination rate of susceptible individuals \\
        $\varphi$ & Pathogen shedding rate from infectious individuals into the environment \\
        $p$ & Vaccine effectiveness upon immune maturation \\
        $\eta$ & Immune maturation rate \\
        $\omega$ & Environmental pathogen decay rate \\
        \bottomrule
    \end{tabular}
    \caption{ \small Model parameters and their biological interpretation.}
    \label{tab:parameters}
\end{table}

\noindent
The model, whose flowchart is reported in Figure~\ref{fig:flowchart}, is given by the following system of nonlinear ordinary differential equations:
    \begin{align*}
        \dot{S} &= \Lambda - \beta_1 S I - \sigma S + (1-p)\eta V - \alpha_1 S\, g(C,\kappa) - \mu S, \\
        \dot{E} &= \beta_1 S I + \beta_2 V I - \xi E - \mu E + \alpha_1 S g(C,\kappa) + \alpha_2 V g(C,\kappa), \\
        \dot{I} &= \xi E - \gamma I - d I - \mu I, \\
        \dot{V} &= \sigma S - \beta_2 V I - (\eta+\mu) V - \alpha_2 V g(C,\kappa), \\
        \dot{R} &= \gamma I - \mu R + p\eta V, \\
        \dot{C} &= \varphi I - \omega C,
    \end{align*}
    where the upper dots denote the time derivatives. Here, $\varphi$ denotes the rate at which infectious individuals shed pathogen into the environment, and $\omega$ is the environmental decay rate.

We assume that environmental pathogen decays with first-order kinetics at a rate $\omega$, giving exponential decay in the absence of
shedding. Infectious individuals contribute to shedding the pathogen to the environment at a mean rate $\varphi$ per infectious individual,
so the total input is proportional to $I$.
All parameters are positive constants; in particular, $p\in(0,1)$, and we assume the biologically meaningful constraints $\alpha_2 \le \alpha_1$ and $\beta_2 \le \beta_1$.

Since the balance equation for the recovered compartment is uncoupled from the remaining ones, we can restrict the analysis to the following subsystem:
\begin{subequations} \label{eq:sys}
    \begin{align}
    \label{subeqS}
        \dot{S} &= \Lambda-\beta_1 SI - \sigma S+(1-p) \eta V-\alpha_1 S g(C,\kappa)-\mu S, \\
        \label{subeqE}
        \dot{E}&= \beta_1 SI +\beta_2 I V - \xi E-\mu E +\alpha_1 S g(C,\kappa)+ \alpha_2 V g(C,\kappa), \\
        \label{subeqI}
        \dot{I} &= \xi E -\gamma I - d I -\mu I, \\
        \label{subeqV}
        \dot{V} &= \sigma S-\beta_2 I V - ( \eta +\mu) V-\alpha_2 V g(C,\kappa),\\
        \label{subeqC}
        \dot{C} &= \varphi I -\omega C,
    \end{align}
\end{subequations}
with initial conditions
\begin{equation} \label{eq:IC}
    S(0) \geq 0, \quad E(0) \geq 0, \quad I(0) \geq 0, \quad V(0) \geq 0, \quad C(0) \geq 0.
\end{equation}
System \eqref{eq:sys}--\eqref{eq:IC} can be written in vector notation as
\begin{equation*}
    \dot{\mathbf{x}}(t) = \mathbf F\bigl(\mathbf{x}(t)\bigr), \quad \mathbf{x}(0) = \mathbf{x}_0 \in \R^5_+,
\end{equation*}
where $\R^5_+$ is the nonnegative cone of $\R^5$, $\mathbf{x}=(S, E, I, V, C)$ is the vector of the state variables and $\mathbf F(\mathbf{x})$ is the autonomous vector field corresponding to the right-hand side of system~\eqref{eq:sys}. As a preliminary step, we establish the existence of a biologically feasible region for the model.
\begin{prop} \label{thm: invariance Omega}
     The region
     \begin{equation*}
          \Omega = \left\{ \, (S, E, I, V, C) \in \R^5_+ \, \Big| \, S > 0, \; \; S+ E + I + V \leq \frac{\Lambda}{\mu}, \; \;  C \leq \frac{\varphi \Lambda}{\omega \mu} \, \right\}
    \end{equation*}
     is positively invariant and globally attractive for system \eqref{eq:sys}--\eqref{eq:IC}.
 \end{prop}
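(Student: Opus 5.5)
I will organise the argument in three stages: well-posedness and positivity, invariance of $\Omega$, and global attractivity.

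\emph{Well-posedness and positivity.} The vector field $\mathbf F$ is locally Lipschitz on $\R^5_+$, since $g(\cdot,\kappa)$ is smooth for $C\ge 0$ when $n\in\mathbb N$. Hence solutions exist and are unique locally. To see that $\R^5_+$ is positively invariant, I will check the quasi-positivity condition: each component of $\mathbf F$ is nonnegative whenever that component vanishes and the others are nonnegative. Concretely,
\[
\dot S|_{S=0}=\Lambda+(1-p)\eta V>0,\quad
\dot E|_{E=0}\ge 0,\quad
\dot I|_{I=0}=\xi E\ge0,\quad
\dot V|_{V=0}=\sigma S\ge 0,\quad
\dot C|_{C=0}=\varphi I\ge 0 .
\]
For strict positivity of $S$, I will write
\[
\dot S\ge \Lambda-\bigl(\beta_1 I+\sigma+\alpha_1+\mu\bigr)S ,
\]
using $g\le 1$. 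The variation-of-constants formula then gives $S(t)>0$ for all $t>0$, even if $S(0)=0$. Strictly, $\Omega$ as written requires $S>0$, so for $\Omega$ itself I only need that $S$ stays positive once it starts positive, which the same inequality yields.

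\emph{Invariance of $\Omega$.} Let $N_1=S+E+I+V$. Summing \eqref{subeqS}--\eqref{subeqV}, the infection terms cancel, leaving
\[
\dot N_1=\Lambda-\mu N_1-(\gamma+d)I-p\eta V\le \Lambda-\mu N_1 .
\]
By comparison, $N_1(t)\le \Lambda/\mu+\bigl(N_1(0)-\Lambda/\mu\bigr)e^{-\mu t}$. So $N_1(0)\le\Lambda/\mu$ implies $N_1(t)\le\Lambda/\mu$ for all $t$, and also $\limsup_{t\to\infty} N_1\le \Lambda/\mu$. In particular this gives boundedness, so solutions exist globally. Next, as long as $I\le N_1\le \Lambda/\mu$,
\[
\dot C\le \frac{\varphi\Lambda}{\mu}-\omega C ,
\]
and the same comparison argument gives $C(t)\le \varphi\Lambda/(\omega\mu)$ whenever $C(0)$ satisfies this bound. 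This proves positive invariance.

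\emph{Global attractivity.} For an arbitrary nonnegative initial datum, I will show $\limsup N_1\le\Lambda/\mu$ and then $\limsup C\le \varphi\Lambda/(\omega\mu)$. The second follows from the first by a standard limsup (fluctuation) argument: for any $\epsilon>0$ and $t$ large, $I\le \Lambda/\mu+\epsilon$, and comparison gives $\limsup C\le \varphi(\Lambda/\mu+\epsilon)/\omega$. Combined with $S(t)>0$ for $t>0$, every trajectory approaches $\Omega$.

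\emph{Main obstacle.} The delicate point is that the bounds are reached only asymptotically when the initial data lie outside $\Omega$, so "globally attractive" must be understood in the limsup sense, namely $\operatorname{dist}(\mathbf x(t),\Omega)\to 0$, rather than as entry in finite time. I will state this interpretation explicitly. If $N_1(0)>\Lambda/\mu$, the trajectory may never enter $\Omega$ exactly; indeed, when $I\equiv 0$ and $V\equiv 0$ along the way, $N_1$ approaches $\Lambda/\mu$ only from above.
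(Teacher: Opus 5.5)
Your proof is correct and follows the paper's argument: positivity of the cone and of $S$, comparison of the summed population with $\dot y=\Lambda-\mu y$, and then the analogous bound for $C$. The one difference is that you sum only $S+E+I+V$, which gives the extra dissipative terms $-(\gamma+d)I-p\eta V$ and matches the constraint defining $\Omega$ directly; the paper sums all host compartments, including the decoupled $R$.

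Your closing remark is wrong, although it does not affect the proof. The scenario $I\equiv V\equiv 0$ cannot occur, because $V\equiv 0$ forces $\sigma S\equiv 0$ while $S(t)>0$ for $t>0$. In fact $S$ is eventually bounded below, so $\liminf V>0$. The term $-p\eta V$ in $\dot N_1$ then gives $\limsup N_1<\Lambda/\mu$, and hence also $\limsup C<\varphi\Lambda/(\omega\mu)$. So every trajectory enters $\Omega$ in finite time. The limsup reading of attractivity, which is all the paper proves, is a convenience rather than a necessity.
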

\begin{proof}
    See Appendix A.
\end{proof}

\section{Elimination of the disease} \label{sec: extinction}
\subsection{The reproduction numbers}
\label{sec:repro}

 The model admits a unique disease--free equilibrium, denoted as \begin{equation*}
     E_0=(S_0,0,0,V_0,0),
 \end{equation*}
where
\begin{equation}\label{eq:DFE}
S_0=\frac{\Lambda(\mu+\eta)}{\mu(\mu+\eta)+\sigma(\mu+p\eta)},
\qquad
V_0=\frac{\Lambda\sigma}{\mu(\mu+\eta)+\sigma(\mu+p\eta)}.
\end{equation}

The local and global stability conditions for the disease--free equilibrium are typically established in terms of a threshold quantity measuring the transmission potential of the infection. In the absence of pre-existing immunity or interventions, this threshold is the well--known \emph{basic reproduction number} $\mathcal R_0$, defined as the expected number of secondary infections generated by an index case
introduced into a wholly susceptible population over its entire infectious period and in the absence of pre-existing immunity or interventions~\cite{diekmann1990definition,heffernan2005perspectives}. When specific mitigation measures (e.g., vaccination or contact reduction) are in place, the corresponding quantity is the \emph{control reproduction number}, usually denoted by $\Rc$~\cite{gumel2004modelling}. By construction, $\Rc \leq \Rzero$. Accordingly, throughout this work we use $\mathcal R_c$ to characterise the stability of the disease--free equilibrium of system~\eqref{eq:sys}. For completeness, we will also refer to the baseline quantity $\mathcal R_0$, which can be recovered from the expression of $\mathcal R_c$ by removing vaccination (i.e., by setting
$\sigma=0$, so that $V_0=0$ and $S_0=\Lambda/\mu$). The expression of the control reproduction number $\mathcal R_c$ in terms of the model parameters has already been derived for this model in
\cite{gokcce2024dynamics} by using the next--generation matrix approach~\cite{diekmann1990definition,van2002reproduction}. We recall here its expression:
\begin{align}\label{eq:Rc}
\mathcal R_c
&=\frac{\xi}{(\xi+\mu)(\mu+\gamma+d)}
\left[\beta_1 S_0+\beta_2 V_0+\delta_{1n}\frac{\varphi}{\kappa \omega}(\alpha_1 S_0+\alpha_2 V_0)\right]\\
\notag
&=\frac{\xi}{\omega(\xi+\mu)(\mu+\gamma+d)}
\frac{\Lambda(\eta+\mu)}{\mu(\sigma+\eta+\mu)+p\eta\sigma}
\left[\omega\beta_1+\frac{\delta_{1n}\alpha_1\varphi}{\kappa}
+\frac{\sigma}{\eta+\mu}\bigg(\beta_2\omega+\frac{\delta_{1n}\alpha_2\varphi}{\kappa}\bigg)\right],
\end{align}
where $\delta_{1n}$ is the Kronecker delta ($\delta_{1n}=1$ if $n=1$ and $\delta_{1n}=0$ otherwise). The expression of the basic reproduction number reads instead
\begin{equation} \label{eq:R0}
\mathcal R_0=\frac{\xi\,\Lambda}{\mu(\xi+\mu)(\gamma+d+\mu)}
\left(\beta_1+\delta_{1n}\frac{\alpha_1\varphi}{\kappa\,\omega}\right).
\end{equation}
Note that $\mathcal R_c$ (or analogously, $\Rzero$) can be written as the sum of two contributions: $\mathcal R_c^{\rm dir}$, associated with direct (person--to--person) transmission, and $\mathcal R_c^{\rm env}$, associated with environmental (fomite--mediated) transmission. Specifically, we write
\[
\Rc=\mathcal R_c^{\rm dir}+\delta_{1n}\,\mathcal R_c^{\rm env},
\]
with
\begin{equation} \label{eq: Rcdir, Rcenv}
    \mathcal R_c^{\rm dir}=\frac{\xi(\beta_1 S_0+\beta_2 V_0)}{(\xi+\mu)(\mu+\gamma+d)},
\qquad
\mathcal R_c^{\rm env}=\frac{\xi\varphi(\alpha_1 S_0+\alpha_2 V_0)}{\kappa \omega(\xi+\mu)(\mu+\gamma+d)}.
\end{equation}
The parameter $\mathcal R_c^{\mathrm{dir}}$ measures the expected number of secondary infections generated through direct contacts by a typical infectious individual introduced into the disease--free population, whereas $\mathcal R_c^{\mathrm{env}}$ quantifies the additional infections arising from the environmental
route. In particular, $\mathcal R_c\ge \mathcal R_c^{\mathrm{dir}}$, and when $n>1$, $\mathcal R_c=\mathcal R_c^{\mathrm{dir}}$. Therefore, when $n>1$, the environmental route does not contribute to the disease invasion.

\begin{prop}
	\label{thm: LAS DFE}
	If $\mathcal{R}_c < 1$, then the disease--free equilibrium $E_0$ of system \eqref{eq:sys} is locally asymptotically stable. If $\mathcal{R}_c > 1$, then $E_0$ is unstable.
\end{prop}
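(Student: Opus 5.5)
The plan is to apply the van den Driessche--Watmough theorem \cite{van2002reproduction}: if $\rho(FV^{-1})<1$ then $E_0$ is locally asymptotically stable, and if $\rho(FV^{-1})>1$ it is unstable. It therefore suffices to check the hypotheses (A1)--(A5) of that theorem and to verify that $\rho(FV^{-1})$ coincides with $\mathcal R_c$ in \eqref{eq:Rc}.

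The infected compartments are ordered as $(E,I,C)$. The new-infection vector is
\[
\mathcal F=\bigl(\beta_1SI+\beta_2VI+(\alpha_1S+\alpha_2V)g(C,\kappa),\,0,\,0\bigr).
\]
I would treat shedding $\varphi I$ as a transfer term, which yields the same spectral threshold. The transition vector is
\[
\mathcal V=\bigl((\xi+\mu)E,\,-\xi E+(\gamma+d+\mu)I,\,-\varphi I+\omega C\bigr).
\]
Hypotheses (A1)--(A4) are immediate: all terms are nonnegative, they vanish when the infected variables vanish, and the DFE set is invariant.

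Linearising at $E_0$ gives
\[
F=\begin{pmatrix}0&\beta_1S_0+\beta_2V_0&\delta_{1n}\kappa^{-1}(\alpha_1S_0+\alpha_2V_0)\\0&0&0\\0&0&0\end{pmatrix}.
\]
This uses $g'(0,\kappa)=\delta_{1n}/\kappa$. The matrix $V$ is lower triangular with positive diagonal and nonpositive off-diagonal entries, so it is a nonsingular M-matrix. Since $F$ has rank one, $\rho(FV^{-1})$ is the $(1,1)$ entry of $FV^{-1}$. The first column of $V^{-1}$ is
\[
\left(\tfrac{1}{\xi+\mu},\,\tfrac{\xi}{(\xi+\mu)(\gamma+d+\mu)},\,\tfrac{\xi\varphi}{\omega(\xi+\mu)(\gamma+d+\mu)}\right)^T,
\]
and this gives exactly \eqref{eq:Rc}.

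Hypothesis (A5) requires that the Jacobian of the disease-free subsystem $(S,V)$ at $(S_0,V_0)$ have eigenvalues with negative real part. That Jacobian is
\[
\begin{pmatrix}-(\sigma+\mu)&(1-p)\eta\\\sigma&-(\eta+\mu)\end{pmatrix}.
\]
Its trace is negative. Its determinant is $(\sigma+\mu)(\eta+\mu)-(1-p)\eta\sigma=\mu(\mu+\eta)+\sigma(\mu+p\eta)>0$. Hence (A5) holds.

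The main subtlety is the case $n\ge2$. There $g$ has zero derivative at $C=0$, so $C$ enters the linearisation only through the decoupled eigenvalue $-\omega$. I must check that this degenerate linearisation is still valid. It is, because $g$ is $C^1$ near $0$ for every $n\in\mathbb N$, so the theorem applies verbatim with the environmental term absent from $F$.

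For the record, the full Jacobian at $E_0$ is block triangular. Its spectrum is the union of the spectrum of the $(S,V)$ block above and that of $F-V$. By the M-matrix lemma in \cite{van2002reproduction}, $s(F-V)<0$ if and only if $\rho(FV^{-1})<1$, and $s(F-V)>0$ if and only if $\rho(FV^{-1})>1$. This gives both claims of the proposition.
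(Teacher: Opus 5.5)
Your argument is correct, but it takes a different route from the paper. The paper does not invoke the van den Driessche--Watmough theorem. It writes out $J(E_0)$ and separates the $(S,V)$ block, checking negative trace and positive determinant as you do. It then applies the Routh--Hurwitz criterion directly to the $3\times 3$ infected block, which is $F-V$ up to a permutation. It shows that $\mathcal A_3=\omega(\xi+\mu)(\gamma+d+\mu)(1-\mathcal R_c)$ is positive exactly when $\mathcal R_c<1$. It then checks by hand that $\mathcal A_3>0$ forces $\mathcal A_2>0$ and $\mathcal A_1\mathcal A_2>\mathcal A_3$. Instability for $\mathcal R_c>1$ comes from $\mathcal A_3<0$, which gives a positive real root of the cubic. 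In effect, the paper re-proves for this particular Metzler matrix the equivalence $s(F-V)<0\iff\rho(FV^{-1})<1$ that you import as the general M-matrix lemma.

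Your route buys several things:
\begin{itemize}
\item It is shorter and handles $n=1$ and $n\ge 2$ uniformly through $g'(0,\kappa)=\delta_{1n}/\kappa$.
\item It verifies that \eqref{eq:Rc} really is $\rho(FV^{-1})$, which the paper only quotes from G\"ok\c{c}e et al.
\item It gives $s(F-V)>0$ directly when $\mathcal R_c>1$. This is exactly what the weak-repeller lemma later uses when it says instability of $E_0$ is ``equivalent to $s(\widetilde B_0)>0$''.
\end{itemize}
The paper's route is fully elementary and self-contained, and its Routh--Hurwitz bookkeeping is reused almost verbatim for $\bar A_2$ in the global stability theorem.

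Two cosmetic points. First, hypothesis (A5) also involves the $-V$ block, though that is trivially stable because $V$ is lower triangular with positive diagonal. Second, $\rho(FV^{-1})$ equals the $(1,1)$ entry because only the first row of $FV^{-1}$ is nonzero. Rank one alone only tells you that the nonzero eigenvalue is the trace.
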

\begin{proof}
    By linearising system \eqref{eq:sys} at the disease--free equilibrium $E_0$, we get the characteristic equation det$(J(E_0)-\lambda \mathrm{I}) = 0$, where
    \begin{equation*}
    J(E_0) =
        \begin{bmatrix}
            J_{11} & 0     & J_{13} & J_{14} & J_{15} \\
            0      & J_{22}& J_{23} & 0      & J_{25} \\
            0      & J_{32}& J_{33} & 0      & 0      \\
            J_{41} & 0     & J_{43} & J_{44} & J_{45} \\
            0      & 0     & J_{53} & 0      & J_{55} \\
        \end{bmatrix},
    \end{equation*}
  and the nonzero entries are given by
    \begin{align*}
        J_{11} &= - ( \sigma + \mu),
        & J_{13} &= -\beta_1 S_0 ,
        & J_{14} &= (1-p) \eta ,
        & J_{15} &= -\frac{\alpha_1 S_0 \delta_{1n}}{\kappa},\\
        J_{22} &= - (\xi + \mu) ,
        & J_{23} &= \beta_1 S_0 + \beta_2 V_0 ,
        & J_{25} &= \frac{( \alpha_1 S_0 + \alpha_2 V_0 ) \delta_{1n}}{\kappa}, & J_{32} &= \xi , \\
        J_{33} &= - (\gamma + d + \mu), &
        J_{41} &= \sigma,
        & J_{43} &= - \beta_2 V_0,
        & J_{44} &= - (\eta + \mu),\\
        J_{45} &= - \frac{\alpha_2 V_0 \delta_{1n}}{\kappa}, & J_{53} &= \varphi ,
        & J_{55} &= -\omega.
    \end{align*}
    By interchanging the second row and column of $J(E_0)$ with the fourth, we get the block--triangular matrix
    \begin{equation*}
    J(E_0) =
        \begin{bmatrix}
            A & 0 \\
            0 & B
        \end{bmatrix},
    \end{equation*}
    with
    \begin{equation*}
        A = \begin{bmatrix}
            J_{11} & J_{14} \\
            J_{41} & J_{44} \\
        \end{bmatrix}, \qquad B=
        \begin{bmatrix}
            J_{33} & J_{32} & 0 \\
            J_{23} & J_{22} & J_{25} \\
            J_{53} & 0 & J_{55}
        \end{bmatrix}.
    \end{equation*}
    Since $\text{tr}(A) = - \sigma - 2 \mu - \eta < 0$ and $\det(A) = \mu ( \mu + \sigma + \eta )+ p \eta \sigma > 0$, the eigenvalues of $A$ have negative real parts. The other eigenvalues of $J(E_0)$ are those of $B$, whose characteristic equation is $\lambda^3 + \mathcal{A}_1 \lambda^2 +  \mathcal{A}_2 \lambda + \mathcal{A}_3 = 0$,
    where
    \begin{align*}
        \mathcal{A}_1 &= - ( J_{22} + J_{33} + J_{55} ), \\
        \mathcal{A}_2 &= J_{22} J_{33} - J_{23} J_{32} + J_{55}(J_{22} + J_{33}) , \\
        \mathcal{A}_3 &= - J_{22} J_{33} J_{55} - J_{32} J_{25} J_{53} + J_{23} J_{32} J_{55}.
    \end{align*}
    From the Routh--Hurwitz criterion \cite{murray2007mathematical,martcheva2015introduction}, the roots of the characteristic equation have negative real parts if and only if the following conditions hold: $(i)\,\mathcal{A}_1 > 0$, $(ii)\,\mathcal{A}_1  \mathcal{A}_2 - \mathcal{A}_3 > 0$ and $(iii) \, \mathcal{A}_3 \, > 0$. If at least one of them is negative, then there is at least one eigenvalue with a positive real part. Condition $(i)$ is satisfied for all the parameter values, since $\mathcal{A}_1 = \xi + 2\mu + \gamma + d + \omega > 0$.
    Condition $(iii)$ can be rewritten as
    \begin{equation*}
        J_{32}( J_{23}J_{55} - J_{25}J_{53}) > J_{22}J_{33}J_{55},
    \end{equation*}
    i.e.,
    \begin{equation} \label{eq: a3r and a3l}
         \xi \left[] \beta_1 S_0 + \beta_2 V_0 + \varphi (\alpha_1 S_0 + \alpha_2 V_0) \frac{\delta_{1n}}{\omega \kappa} \right] < ( \xi + \mu)( \gamma + d + \mu).
    \end{equation}
    Remembering that $V_0 = \frac{\sigma}{\eta + \mu}S_0$, the condition $\mathcal{A}_3 > 0$ can be rewritten as
    \begin{equation*}
     \frac{ \xi S_0 }{( \xi + \mu)( \gamma + d + \mu)} \left[ \beta_1 + \varphi \alpha_1\frac{\delta_{1n}}{\omega \kappa}  + \frac{\sigma}{\eta + \mu} \bigg( \beta_2 + \varphi \alpha_2 \frac{\delta_{1n}}{\omega \kappa} \bigg) \right] < 1,
    \end{equation*}
where the left--hand side is exactly the control reproduction number $\mathcal{R}_c$. Thus, $\mathcal{A}_3 > 0$ if and only if $\mathcal{R}_c < 1$. We show now that condition $(iii)$ implies condition $(ii)$. To this aim, we preliminarily show that $\mathcal{A}_3 > 0$ implies $\mathcal{A}_2 > 0$ and then show that $\mathcal{A}_2 > 0$ and $\mathcal{A}_3 > 0$ imply $\mathcal{A}_1 \mathcal{A}_2 > \mathcal{A}_3$.  Firstly, we denote as $\mathcal{A}_3^L$ and $\mathcal{A}_3^R$ the left--hand and right--hand sides of equation~\eqref{eq: a3r and a3l}, respectively:
    \begin{align*}
        \mathcal{A}_3^L = \xi \left[\beta_1 S_0 + \beta_2 V_0 + \varphi (\alpha_1 S_0 + \alpha_2 V_0) \frac{\delta_{1n}}{\omega \kappa} \right], \qquad
        \mathcal{A}_3^R = ( \xi + \mu)( \gamma + d + \mu).
    \end{align*}
Note that, with this notation, $\mathcal{A}_3 = \omega(\mathcal{A}_3^R - \mathcal{A}_3^L)$ and condition $(iii)$ can be rewritten as
\begin{equation} \label{a3 > 0}
    \mathcal{A}_3 > 0 \quad \iff \quad \mathcal{A}_3^L < \mathcal{A}_3^R.
\end{equation}
    In addition, the inequality $\mathcal{A}_2 > 0$ can be rewritten as $J_{22} J_{33} + J_{55}(J_{22} + J_{33}) > J_{23} J_{32}$, that is,
\begin{equation} \label{eq: a2 > 0}
    (\xi + \mu)( \gamma + d + \mu) + \omega(\xi + 2\mu + \gamma + d) > (\beta_1 S_0 + \beta_2 V_0) \xi.
\end{equation}
By denoting
\begin{align*}
    \mathcal{A}_2^L = (\xi + \mu)( \gamma + d + \mu) + \omega(\xi + 2\mu + \gamma + d),\qquad
    \mathcal{A}_2^R = (\beta_1 S_0 + \beta_2 V_0) \xi,
\end{align*}
we get
\begin{equation}
     \mathcal{A}_2 > 0 \quad \iff \quad \mathcal{A}_2^L > \mathcal{A}_2^R.
\end{equation}

Noting that $\mathcal{A}_3^L \geq  \mathcal{A}_2^R$ (for $n=1$, $\mathcal{A}_3^L > \mathcal{A}_2^R$, while if $n\neq 1$ then $\delta_{1n}=0$ and $\mathcal A_3^L=\mathcal A_2^R$)
 and $\mathcal{A}_2^L > \mathcal{A}_3^R$ yields:

\begin{equation*}
    \mathcal{A}_2^L > \mathcal{A}_3^R > \mathcal{A}_3^L \geq \mathcal{A}_2^R \quad \implies \quad \mathcal{A}_2^L > \mathcal{A}_2^R \quad \implies \mathcal{A}_2 > 0.
\end{equation*}

\noindent Finally, to prove that $\mathcal{A}_1 \mathcal{A}_2 > \mathcal{A}_3$, let us rewrite $\mathcal{A}_1 = \omega + \mathcal{A}_1^*$, with $\mathcal{A}_1^* = \xi + 2\mu + \gamma + d$.
With this notation, the condition $\mathcal{A}_1 \mathcal{A}_2 > \mathcal{A}_3$ becomes
\begin{equation*}
    (\omega + \mathcal{A}_1^*)(\mathcal{A}_2^L - A_2^R) > \omega(\mathcal{A}_3^R - \mathcal{A}_3^L).
\end{equation*}
Since $\mathcal{A}_2^L > \mathcal{A}_3^R$ and $\mathcal{A}_2^R \leq \mathcal{A}_3^L$, then $\omega (\mathcal{A}_2^L - A_2^R) > \omega(\mathcal{A}_3^R - \mathcal{A}_3^L)$. Furthermore, since $\mathcal{A}_1^*(\mathcal{A}_2^L - A_2^R) = \mathcal{A}_1^*\mathcal{A}_2$ is positive if $\mathcal{A}_3 > 0$, the condition $\mathcal{A}_1 \mathcal{A}_2 > \mathcal{A}_3$ is satisfied. This proves that if $\mathcal{R}_c < 1$, then conditions $(i), (ii)$ and $(iii)$ are fulfilled and all the eigenvalues of the matrix $B$ have negative real parts, implying that $E_0$ is locally asymptotically stable when $\mathcal{R}_c < 1$. Conversely, if $\mathcal{R}_c >1$, then $\mathcal A_3 <0$, and there is at least one eigenvalue with positive real part. Thus, if $\mathcal{R}_c >1$, then $E_0$ is unstable.
\end{proof}

\subsection{The global stability result} \label{sec: GAS}

To obtain a sufficient condition for the global stability of the disease--free equilibrium, we follow the approach developed by Kamgang and Sallet~\cite{kamgang2008computation}. Accordingly, system \eqref{eq:sys} can be rewritten in the pseudo--triangular form
\begin{equation}\label{eq:model_matrices}
\begin{cases}
\dot{x}_1 = A_1(x)\,(x_1-x_1^*) + A_{12}(x)\,x_2,\\
\dot{x}_2 = A_2(x)\,x_2,
\end{cases}
\end{equation}
where $x_1=(S,V)^{T}$ collects the \emph{uninfected} compartments and $x_2=(E,I,C)^{T}$ the
\emph{infected} ones. Moreover, $A_2(x)$ is a Metzler matrix, i.e., its off--diagonal entries satisfy
$a_{ij}(x)\ge 0$ for $i\neq j$. The global asymptotic stability of the disease--free equilibrium $x^*=(x_1^*,0)$ follows from Theorem~4.3 in
\cite{kamgang2008computation}, provided that: $(i)$ the system is point dissipative on $\Omega$, $(ii)$ the
disease--free subsystem is globally asymptotically stable at $x_1^*$, and
$(iii)$ suitable spectral and upper--bound conditions on $A_2(x)$ hold (see Appendix B for all conditions). We start by proving point dissipativity on
$\Omega$.

\begin{prop} \label{lemma: point dissipativity}
    System \eqref{eq:sys} is point dissipative on $\Omega$, i.e., there exists a compact set $K \subseteq \Omega$ such that for every $y \in \Omega$, there exists a time $t(y)$ for which $x(t, 0, y) \in \mathring{K}$ for every $t \geq t(y)$. The compact set $K$ is given by
     \begin{equation*}
        K = \left\{ (S, E, I, V, C) \in \R^5_+ \, \Big| \, S \geq \delta, \, S + E + I + V \leq \frac{\Lambda}{\mu}, \, C \leq \frac{\varphi \Lambda}{\omega \mu} \right\}\,,
    \end{equation*}
where
\begin{equation} \label{eq:delta}
\delta = \frac{1}{2} \frac{\Lambda \mu}{\beta_1 \Lambda + (\sigma + \alpha_1 + \mu) \mu}\,.
\end{equation}
\end{prop}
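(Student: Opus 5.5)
The plan is to use a differential inequality for $S$ together with the positive invariance of $\Omega$ from Proposition~\ref{thm: invariance Omega}. Since $\Omega$ is positively invariant, every trajectory starting at $y\in\Omega$ satisfies, for all $t\ge 0$,
\[
S+E+I+V\le \frac{\Lambda}{\mu},\qquad C\le \frac{\varphi\Lambda}{\omega\mu},\qquad S>0 .
\]
The last two constraints defining $K$ are therefore automatically preserved. The only new requirement is the lower bound $S\ge\delta$, and eventually $S>\delta$.

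\textbf{Step 1 (lower bound on $\dot S$).} Along trajectories in $\Omega$ we have $I\le \Lambda/\mu$, $0\le g(C,\kappa)\le 1$ and $(1-p)\eta V\ge 0$. From \eqref{subeqS} this gives
\[
\dot S\;\ge\; \Lambda-\Bigl(\beta_1\frac{\Lambda}{\mu}+\sigma+\alpha_1+\mu\Bigr)S \;=\; \Lambda - aS,
\qquad a:=\frac{\beta_1\Lambda+(\sigma+\alpha_1+\mu)\mu}{\mu}.
\]

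\textbf{Step 2 (comparison).} By the comparison principle for scalar linear ODEs,
\[
S(t)\ \ge\ \frac{\Lambda}{a}+\Bigl(S(0)-\frac{\Lambda}{a}\Bigr)e^{-at}\ \ge\ \frac{\Lambda}{a}\bigl(1-e^{-at}\bigr),
\]
and $\Lambda/a=2\delta$ with $\delta$ as in \eqref{eq:delta}. The right-hand side exceeds $\delta$ as soon as $e^{-at}<1/2$. Hence we may take $t(y)=(\ln 2)/a$, which is in fact uniform in $y$, and obtain $S(t)>\delta$ for all $t\ge t(y)$. This yields the strict inequality in the $S$-coordinate.

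\textbf{Step 3 (interior of $K$).} The main subtlety is what $\mathring K$ means. The upper bounds on $S+E+I+V$ and on $C$ cannot in general become strict. For example, on the invariant set $\{E=I=C=0\}$ we have $\frac{d}{dt}(S+E+I+V)=\Lambda-\mu(S+E+I+V)$, so a trajectory starting with total population $\Lambda/\mu$ stays on that face. For this reason $\mathring K$ must be understood as the interior of $K$ \emph{relative to} $\Omega$, in the sense used by Kamgang--Sallet. Since those faces are shared by $K$ and $\Omega$, we have
\[
\mathring K=\{x\in\Omega : S>\delta\}.
\]
Steps 1--2 then show that every trajectory from $\Omega$ enters and remains in this set after time $t(y)$. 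Compactness of $K$ is immediate, since $K$ is closed and bounded.
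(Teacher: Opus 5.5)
Your argument is correct, and Steps 1--2 take a cleaner route than the paper. Both proofs rest on the same estimate $\dot S\ge\Lambda-aS$ on $\Omega$. The paper uses it only at the level $S=\delta$, where it gives $\dot S\ge\Lambda/2>0$. It then argues by contradiction that a trajectory with $S(0)>\delta$ can never fall back to $\delta$, and handles $S(0)\le\delta$ separately by saying $S$ increases until it reaches $\delta$, without giving an explicit entry time. You instead integrate the inequality by scalar comparison, getting $S(t)\ge 2\delta(1-e^{-at})+S(0)e^{-at}$. This yields an explicit entry time $\ln 2/a$, uniform in $y$, and even $\liminf_{t\to\infty}S(t)\ge 2\delta$. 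So the factor $\tfrac12$ in $\delta$ is only a safety margin for you, not what makes the barrier work. One small correction: at exactly $t=\ln 2/a$, strictness comes from the term $S(0)e^{-at}>0$, not from $e^{-at}<1/2$; alternatively take any $t(y)>\ln 2/a$.

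Your Step 3, however, rests on a false claim. Since $R$ has been removed, summing \eqref{subeqS}--\eqref{subeqV} gives
\[
\frac{d}{dt}(S+E+I+V)=\Lambda-\mu(S+E+I+V)-(\gamma+d)I-p\eta V,
\]
not $\Lambda-\mu(S+E+I+V)$. The latter is the equation for the total $N$ including $R$, which is not what $\Omega$ constrains. So the face $S+E+I+V=\Lambda/\mu$ is not invariant: from $(\Lambda/\mu,0,0,0,0)$ one gets $\dot V>0$, and then the total strictly decreases.

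In fact both upper bounds are strict for every $t>0$. Since $S>0$, one has $V(t)>0$ for $t>0$. Set $w=\Lambda/\mu-(S+E+I+V)$, which satisfies $\dot w=-\mu w+(\gamma+d)I+p\eta V$. Set $u=\varphi\Lambda/(\omega\mu)-C$, which satisfies $\dot u=-\omega u+\varphi(\Lambda/\mu-I)$ with $\Lambda/\mu-I\ge S>0$. Variation of constants then gives $w(t)>0$ and $u(t)>0$ for $t>0$. The genuine obstruction to entering the topological interior of $K$ is the invariant disease-free face $\{E=I=C=0\}$.

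Your identification $\mathring K=\{x\in\Omega:S>\delta\}$, with the interior taken relative to $\Omega$, is nonetheless correct. It is also the reading the paper tacitly adopts, since it only checks $S(t)>\delta$, so your proof stands. If one instead takes the interior relative to $\mathbb{R}^5_+$, the strictness argument above supplies the missing step.
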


\begin{proof}
    Our goal is to prove that for every $S(0) > 0$, there exists a $t_{S(0)} > 0$ such that $S(t) > \delta$ for every  $t > t_{S(0)}$. We preliminarily prove the statement for $S(0) > \delta$. By contradiction suppose that for every $t_{S(0)}$, a $\bar{t} > t_{S(0)}$ exists such that $S(\bar{t}) \leq \delta$.
Since $S(t)$ is continuous on its domain and $S(0) > \delta$, there also exists a $t_{S(0)} < \hat{t} < \bar{t}$ such that $S(\hat{t}) = \delta$. If the function $S(t)$ is monotone and non--increasing, we have that $\dot{S}(\hat{t}) \leq 0$. However, even if $S(t)$ is not monotone and $\dot{S}(\hat{t}) > 0$, there surely exists another $\hat{t}'$, with $ \hat{t} < \hat{t}' < \bar{t}$, such that $S(\hat{t}') = \delta$ and
$\dot S(\hat{t}')\le 0$.
For this reason, without loss of generality, we can assume that:
\begin{gather}
    \label{1 cond}
    S(\hat{t}) = \delta\,, \\
    \label{2 cond}
    \dot{S}(\hat{t}) \leq 0 \,.
\end{gather}
From equation \eqref{subeqS}, using inequality \eqref{1 cond}, that $g (C, \kappa) \leq 1$ and $I \leq \Lambda/ \mu$, we have:
\begin{align*}
    \dot{S}(\hat{t}) &= \Lambda + (1- p) \eta V - \delta \big(\beta_1 I + \sigma + \alpha_1 g(C, \kappa) + \mu\big) \\
    &\geq \Lambda - \delta \left(\beta_1 \frac{\Lambda}{\mu} + \sigma + \alpha_1 + \mu \right) \geq \frac{\Lambda}{2} > 0 \,,
\end{align*}
in contradiction with \eqref{2 cond}. If $0 < S(0) \leq \delta$, using that $I(0) < \Lambda / \mu $ and $g(C(0), \kappa)<1$ then:
\begin{align*}
    \dot{S}(0) &= \Lambda + (1- p) \eta V(0) - S(0) \big(\beta_1 I(0) + \sigma + \alpha_1 g(C(0), \kappa) + \mu\big)\\
    & \geq \Lambda - \delta \left(\beta_1 \frac{\Lambda}{\mu} + \sigma + \alpha_1 + \mu \right)  \geq \frac{\Lambda}{2} > 0 \,.
\end{align*}
Therefore, $S(t)$ is initially increasing. As long as $S(t) < \delta$, the derivative $\dot{S}(t)$ remains positive and continues to grow until it reaches the value $S(t) = \delta$. At this point, the previous argument can be reapplied to complete the proof.
\end{proof}

 \begin{thm}
        Let $n=1$ in \cref{eq:C(t)}. The disease--free equilibrium $E_0$ of system \eqref{eq:sys} is globally asymptotically stable in the closure $\bar{\Omega}$ whenever $\mathcal{J}_c\leq 1$, where
    \begin{equation} \label{J0 GAS}
          \mathcal{J}_c = \frac{\Lambda}{\mu}\frac{\xi}{(\xi + \mu)(\gamma + d + \mu)} \left(  \frac{\varphi \alpha_1}{\kappa \omega}  +  \beta_1 \right)\,.
      \end{equation}
    \end{thm}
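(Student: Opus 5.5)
The proof applies Theorem~4.3 of \cite{kamgang2008computation} to the pseudo--triangular form \eqref{eq:model_matrices} with $x_1=(S,V)^T$, $x_2=(E,I,C)^T$ and $x_1^*=(S_0,V_0)$. The key choice is how to handle $g(C,\kappa)=C/(C+\kappa)$ when $n=1$: I write $\alpha_1 S g(C,\kappa)=\frac{\alpha_1 S}{C+\kappa}\,C$ and $\alpha_2 V g(C,\kappa)=\frac{\alpha_2 V}{C+\kappa}\,C$. This gives
\[
A_2(x)=\begin{bmatrix}
-(\xi+\mu) & \beta_1 S+\beta_2 V & \frac{\alpha_1 S+\alpha_2 V}{C+\kappa}\\
\xi & -(\gamma+d+\mu) & 0\\
0 & \varphi & -\omega
\end{bmatrix},
\]
which is Metzler and irreducible on $\Omega$ (the cycle $E\to I\to C\to E$ has positive weights because $S>0$). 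The uninfected block is
\[
A_1(x)=\begin{bmatrix}-(\sigma+\mu)&(1-p)\eta\\ \sigma&-(\eta+\mu)\end{bmatrix}.
\]
The term $A_{12}(x)x_2$ collects the remaining $-\beta_1 SI$, $-\alpha_1 S g$, $-\beta_2VI$ and $-\alpha_2Vg$ contributions. These are linear in $x_2$ with bounded coefficients on $\Omega$.

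The hypotheses are then checked in order. (H1) Point dissipativity on $\Omega$ is Proposition~\ref{lemma: point dissipativity}, together with the invariance and attractivity in Proposition~\ref{thm: invariance Omega}. (H2) On the disease--free manifold $x_2=0$ the subsystem is linear, $\dot x_1=A_1(x_1-x_1^*)$. Since $\operatorname{tr}A_1<0$ and $\det A_1=\mu(\mu+\sigma+\eta)+p\eta\sigma>0$, as computed in Proposition~\ref{thm: LAS DFE}, $x_1^*$ is globally asymptotically stable there. (H3) Irreducibility of $A_2(x)$ was noted above. (H4) We need an upper bound matrix $\bar A_2$ with $A_2(x)\le\bar A_2$ entrywise on $\Omega$, attained at a point of the disease--free boundary or at least such that $\bar A_2$ is realised as a limit. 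Using $S+V\le\Lambda/\mu$, $\beta_2\le\beta_1$, $\alpha_2\le\alpha_1$ and $C+\kappa\ge\kappa$ gives $\beta_1S+\beta_2V\le\beta_1\Lambda/\mu$ and $(\alpha_1S+\alpha_2V)/(C+\kappa)\le\alpha_1\Lambda/(\mu\kappa)$. This yields $\bar A_2$ with these entries, attained at $S=\Lambda/\mu$, $V=E=I=C=0$. That point lies in $\bar\Omega$ and in the closure of the disease--free set, which is exactly why the statement is posed in $\bar\Omega$.

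(H5) The remaining condition is $\alpha(\bar A_2)\le0$. Here I use the Metzler stability criterion for the $3\times3$ matrix, or equivalently Routh--Hurwitz exactly as in the proof of Proposition~\ref{thm: LAS DFE} with $(S_0,V_0)$ replaced by $(\Lambda/\mu,0)$ and the coefficients $(\beta_1,\alpha_1)$. The constant term of the characteristic polynomial is $\omega(\xi+\mu)(\gamma+d+\mu)(1-\mathcal J_c)$. The same chain of inequalities as in that proof shows that $\mathcal J_c<1$ forces all Hurwitz conditions. When $\mathcal J_c=1$, $\bar A_2$ is singular with spectral abscissa $0$, which Kamgang--Sallet allow (stability modulus $\le0$).

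The main obstacle is the last Kamgang--Sallet requirement: at points where $\alpha(\bar A_2)=0$ is reached, the largest invariant set in $\{x\in\bar\Omega: A_2(x)=\bar A_2\ \text{(or the corresponding equality on the Perron eigenvector)}\}$ must be contained in the disease--free manifold. I would handle it as follows. Equality $A_2(x)=\bar A_2$ in the $(1,2)$ and $(1,3)$ entries forces $S=\Lambda/\mu$, $V=0$ and $C=0$. By the $S+E+I+V\le\Lambda/\mu$ constraint this also forces $E=I=0$. Hence the invariant subset is the single point $(\Lambda/\mu,0,0,0,0)$, which lies in $\{x_2=0\}$. That point is not invariant when $\sigma>0$, because $\dot V=\sigma\Lambda/\mu>0$, so the set is in fact empty, which is harmless. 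If the cited theorem instead phrases the condition via a left Perron vector $w$ of $\bar A_2$ with $w^TA_2(x)x_2=w^T\bar A_2x_2$, I would use the positivity of $w$ (from irreducibility) and the same equality analysis on the relevant entries to reach the same conclusion. Combining (H1)--(H5) gives global asymptotic stability of $E_0$ in $\bar\Omega$ whenever $\mathcal J_c\le1$.
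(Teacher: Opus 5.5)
Your proposal is correct and follows the paper's proof essentially step by step: the same Kamgang--Sallet decomposition, the same $A_2(x)$ with $(1,3)$-entry $(\alpha_1S+\alpha_2V)/(C+\kappa)$, the same upper-bound matrix $\bar A_2$, and a Routh--Hurwitz analysis of $\bar A_2$. For that analysis you reuse the computation of Proposition~\ref{thm: LAS DFE} with $(S_0,V_0)$ replaced by $(\Lambda/\mu,0)$, whereas the paper redoes it and, at $\mathcal J_c=1$, factors $p(\lambda)=\lambda(\lambda^2+\mathcal A_1\lambda+\mathcal A_2)$. One small slip in your \textbf{A4} check: when $\alpha_1=\alpha_2$ and $\beta_1=\beta_2$, equality $A_2(x)=\bar A_2$ only forces $C=0$ and $S+V=\Lambda/\mu$ (hence $E=I=0$), so the equality set is a segment $\{S+V=\Lambda/\mu,\ E=I=C=0\}$ of the disease-free manifold rather than the single point $(\Lambda/\mu,0,0,0,0)$, as the paper notes; this still lies in $\{x_2=0\}$, so your conclusion is unaffected.
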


    \begin{proof}
      Condition \textbf{\emph{A1}} of Theorem 4.3 in \cite{kamgang2008computation} has been proved in Proposition \ref{lemma: point dissipativity}.  The disease--free subsystem mentioned in condition \textbf{\emph{A2}} is linear--affine and reads
       \begin{equation} \label{eq: SVR system}
               \begin{bmatrix}
\dot S\\
\dot V
\end{bmatrix}
=
\begin{bmatrix}
-(\sigma+\mu) & (1-p)\eta\\
\sigma & -(\eta+\mu)
\end{bmatrix}
\begin{bmatrix}
S\\[2pt]
V
\end{bmatrix}
+
\begin{bmatrix}
\Lambda\\
0
\end{bmatrix}.
       \end{equation}
   Its equilibrium $E_0=(S_0, V_0)$ is globally asymptotically stable, since the matrix in \eqref{eq: SVR system} has negative trace and positive determinant. Thus, condition~\textbf{\emph{A2}} is satisfied. The matrix $A_2(x)$ reads
    \begin{equation}
        A_2(S, V, C) = \begin{bmatrix}
            - (\xi + \mu) & \beta_1 S + \beta_2 V & \dfrac{\alpha_1 S + \alpha_2 V}{C + \kappa} \\
            \xi & - ( \gamma + d + \mu) & 0 \\
            0 & \varphi & - \omega
        \end{bmatrix}.
    \end{equation}
    This matrix is Metzler and irreducible for all $(S, V, C) \in \Omega$, since its associated directed graph is strongly connected. Thus, the condition \textbf{\emph{A3}} is satisfied. We must determine the upper bound matrix $\bar{A}_2$.  Since for every $(S, V, C) \in \Omega$ it holds that
    \begin{equation*}
        S + V \leq \frac{\Lambda}{\mu},  \qquad \frac{1}{C + \kappa} \leq \frac{1}{ \kappa},
    \end{equation*}
    and $\alpha_2\leq\alpha_1$, $\beta_2\leq\beta_1$,
    the following upper bounds can be used for the terms $(A_2)_{12}$ and $(A_2)_{13}$:
    \begin{equation*}
        \beta_1 S + \beta_2 V \leq \beta_1 (S+V) \leq \beta_1\frac{\Lambda}{\mu} ,
        \qquad
        \frac{\alpha_1 S + \alpha_2 V}{C + \kappa} \leq
    \frac{\alpha_1(S+V)}{\kappa}\leq \frac{\alpha_1}{\kappa}
        \frac{\Lambda}{\mu } .
    \end{equation*}
The upper--bound matrix is then
    \begin{equation} \label{upp bound matrix}
    \bar{A}_2 =
        \begin{bmatrix}
        - ( \xi + \mu) & \dfrac{\beta_1\Lambda}{\mu} & \dfrac{\alpha_1\Lambda}{\mu \kappa} \\
        \xi & - (\gamma + d + \mu) & 0 \\
        0 & \varphi & - \omega
    \end{bmatrix}.
    \end{equation}
    If $\alpha_1\neq \alpha_2$ or $\beta_1 \neq \beta_2$, then this upper--bound is  attained in $\Omega$ at the unique point $\left(\Lambda/\mu, 0 , 0 , 0 , 0 \right)$. If $\alpha_1=\alpha_2$ and $\beta_1=\beta_2$, then it is attained at infinitely many points satisfying $S+V=\Lambda/\mu$ with other coordinates equal to 0. However, in both cases, it is realised for the points in the disease-free submanifold, which implies that the condition \textbf{\emph{A4}} is also satisfied. On the other hand, it is not realised for the Jacobian at the DFE, so we shall obtain only a sufficient condition.  We now compute the eigenvalues of the matrix $\bar{A}_2$ to derive a condition for the stability modulus $s(\bar{A}_2)$ to be non--positive and satisfy the hypothesis \textbf{\emph{A5}}.
By denoting
\begin{align*}
       q_1 = \xi + \mu, \qquad q_2 = \frac{\beta_1\Lambda}{\mu},\qquad
       q_3 =\frac{\alpha_1\Lambda}{\mu \kappa}, \qquad q_4= \gamma + d + \mu,
   \end{align*}
   the characteristic polynomial of $\bar{A}_2$ is
   \begin{equation*}
    p(\lambda) = \lambda^3 + \mathcal{A}_1\lambda^2 + \mathcal{A}_2\lambda + \mathcal{A}_3 = 0
   \end{equation*}
   with
      \begin{align*}
         \mathcal A_1=q_1+q_4+\omega,
\qquad
\mathcal A_2=q_1q_4+\omega(q_1+q_4)-\xi q_2,
\qquad
\mathcal A_3=\omega(q_1q_4-\xi q_2)-\xi\varphi q_3.
      \end{align*}
     From the Routh--Hurwitz criterion \cite{murray2007mathematical,martcheva2015introduction}, all the roots of the characteristic equation
\(
p(\lambda) =0
\)
have negative real parts if and only if
\[
(i)\quad \mathcal A_i>0,\ i=1,2,3,
\qquad\text{and}\qquad
(ii)\quad \mathcal A_1\mathcal A_2>\mathcal A_3.
\]
Clearly, \(\mathcal A_1=q_1+q_4+\omega>0\).
Moreover, the condition \(\mathcal A_3>0\) can be rewritten as
\[
\omega\big(q_1q_4-\xi q_2\big)-\xi\varphi q_3>0
\quad\Longleftrightarrow\quad
\frac{\xi(\varphi q_3+\omega q_2)}{\omega q_1q_4}<1,
\]
that is, \(\mathcal J_c<1\), where \(\mathcal J_c\) is defined in \eqref{J0 GAS}. It remains to verify that \(\mathcal A_2>0\) under the same assumption.
Assume \(\mathcal J_c<1\) (or equivalently, \(\mathcal A_3>0\)). Then \[
\omega\big(q_1q_4-\xi q_2\big)>\xi\varphi q_3
\quad\Longrightarrow\quad
q_1q_4-\xi q_2>\frac{\xi\varphi q_3}{\omega}>0.
\]
In particular, \(q_1q_4-\xi q_2>0\), and therefore
\[
\mathcal A_2=q_1q_4+\omega(q_1+q_4)-\xi q_2
=(q_1q_4-\xi q_2)+\omega(q_1+q_4)>0.
\]
Hence, whenever \(\mathcal J_c<1\), condition \((i)\) of the Routh--Hurwitz criterion is satisfied.

    Finally, still under the assumption $\mathcal A_3>0$, a direct computation gives
\[
\mathcal A_1\mathcal A_2-\mathcal A_3
=(q_1+q_4)\left[(q_1q_4-\xi q_2)+\omega(q_1+q_4+\omega)\right]+\xi\varphi q_3,
\]
which is strictly positive since $q_1+q_4>0$, $\omega>0$, $\xi\varphi q_3>0$, and
$q_1q_4-\xi q_2>\xi\varphi q_3/\omega>0$ follows from $\mathcal A_3>0$.
Therefore, whenever $\mathcal J_c<1$ (equivalently, $\mathcal A_3>0$), condition $(ii)$ also holds and the Routh--Hurwitz criterion is satisfied, yielding $s(\bar A_2)<0$ and then the global stability of $E_0$.

Finally, when $\mathcal J_c=1$ (equivalently, $\mathcal A_3=0$), we have $p(0)=\mathcal A_3=0$ and
the characteristic polynomial can be written as
\[
p(\lambda)=\lambda\left(\lambda^2+\mathcal A_1\lambda+\mathcal A_2\right).
\]
Since $\mathcal A_1>0$ and, under $\mathcal A_3=0$, one obtains
$q_1q_4-\xi q_2=\xi\varphi q_3/\omega>0$, it follows that
\[
\mathcal A_2=(q_1q_4-\xi q_2)+\omega(q_1+q_4)>0.
\]
Therefore, $\lambda=0$ is a simple eigenvalue and the remaining two eigenvalues have negative real
parts, so that $s(\bar A_2)=0$ when $\mathcal J_c=1$. In particular,
\[
s(\bar A_2)\le 0 \quad \text{whenever}\quad \mathcal J_c\le 1.
\]
Consequently, hypothesis~\textbf{\emph{A5}} is satisfied for $\mathcal J_c\le 1$. Since hypotheses
\textbf{\emph{A1}}--\textbf{\emph{A5}} have been verified above, Theorem~4.3 yields that the disease--free equilibrium $E_0$ is globally asymptotically stable in $\bar \Omega$ for $\mathcal J_c\le 1$.
    \end{proof}

\begin{remark}
    For $n=1$, the control reproduction number $\mathcal R_c$ reads
\[
\mathcal R_c
=\frac{\xi}{(\xi+\mu)(\gamma+d+\mu)}
\left[\beta_1 S_0+\beta_2 V_0
+\frac{\varphi}{\kappa\omega}(\alpha_1 S_0+\alpha_2 V_0)\right],
\]
whereas the threshold $\mathcal J_c$ is given by
\[
\mathcal J_c
=\frac{\Lambda}{\mu}\frac{\xi}{(\xi + \mu)(\gamma + d + \mu)} \left(  \frac{\varphi \alpha_1}{\kappa \omega}  +  \beta_1 \right).
\]
Since at the disease--free equilibrium it holds that  $S_0 +V_0 < \Lambda/\mu$, we obtain the estimate
\[
\beta_1 S_0+\beta_2 V_0
+\frac{\varphi}{\kappa\omega}(\alpha_1 S_0+\alpha_2 V_0)
\leq
\beta_1 (S_0+V_0)
+\frac{\alpha_1\varphi}{\kappa\omega}(S_0+V_0)
<
\frac{\Lambda}{\mu}
\left(
\beta_1
+\frac{\alpha_1\varphi}{\kappa\omega}
\right),
\]
and thus, $\mathcal R_c < \mathcal J_c$. In particular, $\mathcal J_c<1$ implies $\mathcal R_c<1$, so that
$\mathcal J_c<1$ is a sufficient condition for disease extinction. The quantity $\mathcal J_c$ arises from replacing the variables $S$ and $V$ in the infected subsystem by their common maximal possible value, $\Lambda/\mu$, in the invariant set $\Omega$, and therefore provides an upper bound on the linearised infection growth.
\end{remark}

As discussed in Subsection~\ref{sec:repro}, in the absence of vaccination, the relevant threshold is
the basic reproduction number $\mathcal R_0$. For $n=1$ it is given by
\[
\mathcal R_0=\frac{\xi\,\Lambda}{\mu(\xi+\mu)(\gamma+d+\mu)}
\left(\beta_1+\frac{\alpha_1\varphi}{\kappa\,\omega}\right).
\]
Clearly, now $\mathcal{R}_0=\mathcal{J}_c$, and in this setting, the sufficient condition obtained via the Kamgang--Sallet approach
coincides with the classical threshold, yielding a necessary and sufficient criterion for global
stability of the disease--free equilibrium.

\begin{cor}\label{novax_GAS}
Let $n=1$ and consider the reduced model obtained in the absence of vaccination. Then, its
disease--free equilibrium
\[
E_0=\left(\frac{\Lambda}{\mu},0,0,0\right)
\]
is globally asymptotically stable in the positively invariant set $\bar\Omega$ if and only if
$\mathcal R_0\le 1$, and is unstable if $\mathcal R_0>1$.
\end{cor}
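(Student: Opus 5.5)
The plan is to specialise the Theorem and Proposition~\ref{thm: LAS DFE} to the vaccine-free model and to show that, once $\sigma=0$, the Kamgang--Sallet threshold $\mathcal J_c$ coincides with $\mathcal R_0$.

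First I would write down the reduced model. Setting $\sigma=0$ gives $\dot V=-(\eta+\mu)V$ in \eqref{eq:sys}, so $V(t)\to0$ exponentially and $\{V=0\}$ is invariant. The reduced model is therefore the four-dimensional system in $(S,E,I,C)$ obtained by dropping $V$. Its feasible region $\bar\Omega$ is the projection of the region in Proposition~\ref{thm: invariance Omega}. Positive invariance and attractivity carry over verbatim from Appendix A, and the unique disease-free equilibrium is $(\Lambda/\mu,0,0,0)$, in agreement with \eqref{eq:DFE} at $\sigma=0$.

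For sufficiency ($\mathcal R_0\le1$ implies GAS), I would rerun the Kamgang--Sallet argument of the Theorem on the reduced system with $x_1=S$ and $x_2=(E,I,C)$. I will check the five hypotheses in turn.
\begin{enumerate}
\item Point dissipativity (\textbf{\emph{A1}}) follows exactly as in Proposition~\ref{lemma: point dissipativity}, with the same $\delta$ and with the terms in $V$ deleted.
\item The disease-free subsystem $\dot S=\Lambda-\mu S$ is trivially GAS at $\Lambda/\mu$, which gives \textbf{\emph{A2}}.
\item The matrix $A_2$ is the one in the Theorem with $V=0$. It is Metzler and irreducible, which gives \textbf{\emph{A3}}.
\item Using $S\le\Lambda/\mu$ and $1/(C+\kappa)\le1/\kappa$, the upper bound $\bar A_2$ is exactly \eqref{upp bound matrix}. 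It is attained at $(\Lambda/\mu,0,0,0)$, which is now the DFE itself, so \textbf{\emph{A4}} holds.
\item The Routh--Hurwitz computation in the Theorem's proof shows $s(\bar A_2)\le0$ if and only if $\mathcal J_c\le1$. Comparing \eqref{J0 GAS} with \eqref{eq:R0} at $n=1$ gives $\mathcal J_c=\mathcal R_0$, so \textbf{\emph{A5}} holds whenever $\mathcal R_0\le1$.
\end{enumerate}
Theorem~4.3 of \cite{kamgang2008computation} then yields global asymptotic stability in $\bar\Omega$.

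For necessity and instability, suppose $\mathcal R_0>1$. I would linearise at the DFE as in Proposition~\ref{thm: LAS DFE}. The block $A$ reduces to the scalar $-\mu$. The block $B$ is unchanged with $S_0=\Lambda/\mu$ and $V_0=0$, so $\mathcal A_3=\omega(\mathcal A_3^R-\mathcal A_3^L)<0$ exactly when $\mathcal R_0>1$. Since the characteristic polynomial is monic cubic with $\mathcal A_3=p(0)<0$, it has a positive real root. Hence the DFE is unstable and in particular not GAS, which gives the ``only if'' direction.

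The main obstacle is the boundary case $\mathcal R_0=1$. Linearisation is inconclusive there, so the conclusion must come from the Kamgang--Sallet theorem allowing $s(\bar A_2)=0$. I would verify that hypothesis \textbf{\emph{A5}} in Appendix B indeed permits $s(\bar A_2)\le0$ together with the irreducibility of $\bar A_2$, exactly as the Theorem already used at $\mathcal J_c=1$. I would also check that the largest invariant set in $\{x_2:\ \langle c,\bar A_2 x_2\rangle=0\}$ is the DFE. Here $c$ is the positive left Perron eigenvector of $\bar A_2$ and $\langle\cdot,\cdot\rangle$ the Euclidean inner product. On that set the strict inequality $S<\Lambda/\mu$ off the DFE, or $C>0$ in the denominator $C+\kappa$, forces $x_2=0$.
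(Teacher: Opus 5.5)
Your proposal is correct and follows the paper's route: at $\sigma=0$ one has $\mathcal J_c=\mathcal R_0$, so the Kamgang--Sallet theorem gives global stability for $\mathcal R_0\le 1$, since hypothesis \textbf{\emph{A5}} already allows $s(\bar A_2)=0$. Instability for $\mathcal R_0>1$ then comes from $\mathcal A_3<0$ in Proposition~\ref{thm: LAS DFE}. There are two harmless slips: with $\sigma=0$ one only has $\dot V=-(\beta_2 I+\eta+\mu+\alpha_2 g)V\le-(\eta+\mu)V$, not equality. Also, your extra LaSalle check is misstated: $c^{T}\bar A_2=s(\bar A_2)c^{T}=0$ at the boundary case makes $\{\langle c,\bar A_2x_2\rangle=0\}$ the whole space, and the relevant set is $\{\langle c,(\bar A_2-A_2(x))x_2\rangle=0\}$. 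In any case this check is already covered by hypothesis \textbf{\emph{A4}}, which you verified.
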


\section{Persistence of the disease and endemic equilibrium}
\label{sec: persistence}

In this section, we show that the threshold condition $\mathcal{R}_c>1$ guarantees that system~\eqref{eq:sys} admits at least one endemic equilibrium and that the disease \emph{persists} in the population. Persistence means that solutions with a positive amount of infected individuals remain uniformly away from the disease--free boundary, i.e., the number of infected individuals is bounded away from zero for large times. To establish persistence, we follow the approach introduced in a seminal paper by Thieme~\cite{thieme1993persistence}, which relies on the dynamical behaviour of the semiflow near the boundary of a suitable positively invariant set. To prove the main persistence result, we preliminarily prove the following lemma.

\begin{lemma}\label{lem:Mpartial_equals_Sigma}
Let $\Phi$ be the semiflow generated by system~\eqref{eq:sys} on $\bar\Omega$ and let
\[
X_0=\{x\in\bar\Omega:\ E>0,\ I>0,\ C>0\},\qquad
\partial X_0=\{x\in\bar\Omega:\ EIC=0\}.
\]
Define the following subsets of $\partial X_0$:
\[
M_{\partial}
=\Bigl\{x_0\in \partial X_0:\ \Phi_t(x_0)\in \partial X_0 \ \text{for all } t\ge 0\Bigr\},
\qquad
\Sigma=\{x\in\bar\Omega:\ E=I=C=0\}.
\]
Then $M_{\partial}=\Sigma$.
\end{lemma}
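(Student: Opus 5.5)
We prove the two inclusions $\Sigma\subseteq M_\partial$ and $M_\partial\subseteq\Sigma$ separately.

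\emph{First inclusion.} Take $x_0\in\Sigma$, so $E(0)=I(0)=C(0)=0$. The infected equations \eqref{subeqE}, \eqref{subeqI}, \eqref{subeqC} then admit the solution $E=I=C\equiv0$. Along this solution, $(S,V)$ evolves according to the linear--affine system \eqref{eq: SVR system}. By uniqueness of solutions (the vector field is locally Lipschitz, since $g$ is smooth for $C\ge0$), this is the trajectory starting at $x_0$. Hence $\Phi_t(x_0)\in\Sigma\subseteq\partial X_0$ for all $t\ge0$, and $\Sigma\subseteq M_\partial$.

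\emph{Second inclusion.} We argue by contrapositive: if $x_0\in\partial X_0\setminus\Sigma$, then $\Phi_t(x_0)\in X_0$ for some $t>0$, so $x_0\notin M_\partial$. The tool is the variation-of-constants form of the infected equations:
\begin{align*}
I(t)&=e^{-q_4t}I(0)+\int_0^t e^{-q_4(t-s)}\xi E(s)\,ds,\\
C(t)&=e^{-\omega t}C(0)+\int_0^t e^{-\omega(t-s)}\varphi I(s)\,ds,\\
E(t)&=e^{-q_1t}E(0)+\int_0^t e^{-q_1(t-s)}\bigl[(\beta_1S+\beta_2V)I+(\alpha_1S+\alpha_2V)g(C,\kappa)\bigr](s)\,ds.
\end{align*}
Every integrand is nonnegative, so positivity of any of these variables at time $t_0$ persists for all $t\ge t_0$. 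We also need $S(t)>0$ for $t>0$. This follows from Proposition~\ref{thm: invariance Omega}, or directly from $\dot S\ge\Lambda>0$ whenever $S=0$.

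Since $x_0\notin\Sigma$, at least one of $E(0),I(0),C(0)$ is positive. We propagate positivity around the cycle $E\to I\to C\to E$ in the three cases.
\begin{itemize}
\item[$(a)$] If $E(0)>0$, then $E(t)>0$ for all $t\ge0$, so the integral term gives $I(t)>0$ for $t>0$, and then $C(t)>0$ for $t>0$.
\item[$(b)$] If $I(0)>0$, then $I(t)>0$ for $t\ge0$, hence $C(t)>0$ for $t>0$. Moreover $E(t)>0$ for $t>0$, because the integrand contains $\beta_1S(s)I(s)>0$ for $s>0$.
\item[$(c)$] If $C(0)>0$, then $C(t)>0$ for $t\ge0$. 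Since $n=1$ or more generally $g(C,\kappa)>0$ for $C>0$, the term $\alpha_1S(s)g(C(s),\kappa)>0$ gives $E(t)>0$ for $t>0$, and then $I(t)>0$ for $t>0$.
\end{itemize}
In every case $\Phi_t(x_0)\in X_0$ for all $t>0$. Hence $x_0\notin M_\partial$, which gives $M_\partial\subseteq\Sigma$ and so $M_\partial=\Sigma$.

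The main care point is the strict positivity of the integrands in case $(c)$ and the $E$-part of case $(b)$. These rest on $S(s)>0$ for $s>0$, which holds even when $S(0)=0$ on $\bar\Omega$. The argument then closes with a uniqueness-plus-continuity argument showing the integrals are strictly positive on $(0,t]$.
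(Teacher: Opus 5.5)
Your proof is correct and follows essentially the same route as the paper. The paper likewise gets $\Sigma\subseteq M_\partial$ from forward invariance of $\Sigma$, and excludes $\partial X_0\setminus\Sigma$ by a three-case propagation of positivity around $E\to I\to C\to E$, using $S(t)>0$ for $t>0$ and variation of constants. Your uniform use of the integral representations is slightly cleaner than the paper's derivative-at-zero reasoning in its Cases 2--3, and it gives the stronger conclusion $\Phi_t(x_0)\in X_0$ for every $t>0$.
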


\begin{proof}
Clearly, $\Sigma\subseteq M_{\partial}$ since $\Sigma$ is forward invariant. To prove the equality, we show that $(\partial X_0\setminus \Sigma)\cap M_{\partial}=\emptyset$.
Let $x_0\in \partial X_0\setminus \Sigma$. Then at least one among $E(0),I(0),C(0)$ is positive, and at least one among them is zero. Moreover, by Proposition~\ref{thm: invariance Omega}, $S(t)>0$ for all $t>0$. We show that $x_0\notin M_{\partial}$ by proving that the
trajectory leaves $\partial X_0$, i.e., there exists $t^*>0$ such that
$E(t^*)>0$, $I(t^*)>0$, and $C(t^*)>0$.

\medskip
\noindent\textbf{Case 1:} $I(0)>0$ (and $E(0)=0$ and/or $C(0)=0$).
From \eqref{subeqI} we have
\[
\dot I(t)=\xi E(t)-(\gamma+d+\mu)I(t)\ge-(\gamma+d+\mu)I(t),
\]
hence $I(t)\ge I(0)e^{-(\gamma+d+\mu)t}>0$ for all $t\ge 0$.
Then, by variation of constants applied to \eqref{subeqC},
\[
C(t)=e^{-\omega t}C(0)+\varphi\int_0^t e^{-\omega(t-s)}I(s)\,ds>0
\qquad \forall\,t>0.
\]
Finally, since $S(t)>0$ and $I(t)>0$ for $t>0$, from \eqref{subeqE} we obtain
\[
\dot E(t)\ge \beta_1 S(t)I(t)-(\xi+\mu)E(t),
\]
which implies $E(t)>0$ for all $t>0$ by a comparison argument. Hence $\Phi_t(x_0)\notin\partial X_0$
for all sufficiently small $t>0$.

\medskip
\noindent\textbf{Case 2:} $E(0)>0$ (and $I(0)=0$ and/or $C(0)=0$).
If $I(0)>0$, then Case~1 applies. Thus, assume $I(0)=0$. From \eqref{subeqI} we have
$\dot I(0)=\xi E(0)>0$, and therefore $I(t)>0$ for all sufficiently small $t>0$.
Then Case~1 applies and yields $C(t)>0$ and $E(t)>0$ for $t>0$. Consequently, the trajectory exits
$\partial X_0$.

\medskip
\noindent\textbf{Case 3:} $C(0)>0$ (and $I(0)=0$ and/or $E(0)=0$).
If $I(0)>0$ or $E(0)>0$, then Case~1 or Case~2 applies. Thus, assume $I(0)=E(0)=0$.
Since $C(0)>0$, by continuity $C(t)>0$ for $t$ in a right neighbourhood of $0$.
Using \eqref{subeqE}, together with the fact that $S(t)>0$ for $t>0$ and also $g(C(t),\kappa)>0$ whenever $C(t)>0$, we obtain
$\dot E(t)>0$ for $t$ close to $0$, hence $E(t)>0$ for some small $t>0$.
Then \eqref{subeqI} implies $I(t)>0$ for small $t>0$, and Case~1 applies, so the trajectory exits
$\partial X_0$.

\medskip
In all the cases, there exists a $t^*>0$ such that $\Phi_{t^*}(x_0)\notin \partial X_0$. This implies that if $x_0 \in \partial X_0 \setminus \Sigma$, then $x_0\notin M_{\partial}$. Thus, $(\partial X_0 \setminus \Sigma) \cap M_{\partial} = \emptyset$, and hence
$M_{\partial}=\Sigma$.
\end{proof}

\begin{lemma}  \label{lemma: E0 weak repeller for X0}
    If $\mathcal{R}_c>1$, then the disease--free equilibrium $E_0$ of system~\eqref{eq:sys} is a weak repeller for the set \[
X_0=\{x\in\bar\Omega:\ E>0,\ I>0,\ C>0\},
\]
i.e.,
\[
\limsup_{t\to\infty} d\bigl(\Phi_t(x_0),E_0\bigr)>0, \qquad \forall x_0 \in X_0.
\]
\end{lemma}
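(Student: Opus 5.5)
We argue by contradiction, using the standard linearisation-plus-comparison argument for weak repellers. Suppose $\mathcal R_c>1$ and that some $x_0\in X_0$ satisfies $\lim_{t\to\infty}\Phi_t(x_0)=E_0$. Then for any small $\varepsilon>0$ there is a time $T>0$ such that, for all $t\ge T$,
\[
S(t)\ge S_0-\varepsilon,\qquad V(t)\ge V_0-\varepsilon,\qquad C(t)\le\varepsilon .
\]

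\textbf{Step 1 (linear lower bound).} On $t\ge T$ we bound the infected subsystem $x_2=(E,I,C)$ from below by a linear cooperative system. For the environmental term we use concavity of $g$ when $n=1$:
\[
g(C,\kappa)=\frac{C}{C+\kappa}\ge \frac{C}{\varepsilon+\kappa}\qquad\text{for }C\le\varepsilon .
\]
For $n\ge2$ we simply drop this term, since it is nonnegative. This gives $\dot x_2\ge M_\varepsilon x_2$, where
\[
M_\varepsilon=\begin{bmatrix}-(\xi+\mu) & \beta_1(S_0-\varepsilon)+\beta_2(V_0-\varepsilon) & \delta_{1n}\dfrac{\alpha_1(S_0-\varepsilon)+\alpha_2(V_0-\varepsilon)}{\kappa+\varepsilon}\\ \xi & -(\gamma+d+\mu) & 0\\ 0&\varphi&-\omega\end{bmatrix}.
\]
This matrix is Metzler. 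For $n=1$ it is irreducible. For $n\ge2$ the $(1,3)$ entry vanishes, and it is then convenient to work with the irreducible $(E,I)$ block and treat $C$ separately.

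\textbf{Step 2 (positivity of the stability modulus).} As $\varepsilon\to0$, $M_\varepsilon$ tends to the matrix $B$ from Proposition~\ref{thm: LAS DFE}, up to reordering of the coordinates. In that proof we showed that $\mathcal R_c>1$ gives $\mathcal A_3<0$, so $B$ has a positive real eigenvalue and $s(B)>0$. By continuity of eigenvalues, $s(M_\varepsilon)>0$ for $\varepsilon$ small enough. By Perron--Frobenius, $s(M_\varepsilon)$ is then a real eigenvalue with a positive left eigenvector $w$; for $n\ge2$ we take $w$ from the $(E,I)$ block.

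\textbf{Step 3 (exponential growth and contradiction).} By the argument of Lemma~\ref{lem:Mpartial_equals_Sigma}, $E(T),I(T),C(T)>0$. Set $L(t)=w\cdot x_2(t)$. Since $w\ge0$ and $\dot x_2\ge M_\varepsilon x_2$,
\[
\dot L\ge w^TM_\varepsilon x_2=s(M_\varepsilon)\,L\qquad\text{for }t\ge T .
\]
The inequality $w^T(\dot x_2-M_\varepsilon x_2)\ge0$ is what requires $w\ge0$. Hence $L(t)\ge L(T)e^{s(M_\varepsilon)(t-T)}\to\infty$. This contradicts both the boundedness of trajectories in $\bar\Omega$ (Proposition~\ref{thm: invariance Omega}) and $x_2(t)\to0$.

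Finally, we need the statement in its stated form, $\limsup_{t\to\infty}d(\Phi_t(x_0),E_0)>0$, not merely that $\Phi_t(x_0)$ fails to converge to $E_0$. We therefore run the argument assuming only
\[
\limsup_{t\to\infty}d(\Phi_t(x_0),E_0)<\eta
\]
for a suitably small $\eta$; the choice of $\eta$ makes the bounds in Step 1 hold for $t$ large, and the same contradiction follows. This proves the stronger uniform weak repeller property.

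\textbf{Main obstacle.} The delicate point is the uniform lower bound of $g$ together with the continuity of $s(M_\varepsilon)$ in Step 2, and in the case $n\ge2$ the reducibility of the matrix. There we would use that $\mathcal R_c=\mathcal R_c^{\rm dir}$, so the $(E,I)$ block alone already has positive stability modulus.
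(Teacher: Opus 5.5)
Your proof is correct and follows essentially the same route as the paper. You use a linear cooperative lower bound for the infected variables near $E_0$: $g(C,\kappa)\ge C/(\kappa+\varepsilon)$ when $n=1$, and the $(E,I)$ block alone when $n\ge 2$. You then get positivity of the stability modulus from $\mathcal R_c>1$ by continuity, and exponential growth from Perron--Frobenius. The only difference is that you work with a positive left eigenvector and the scalar function $L=w\cdot x_2$, while the paper uses a positive right eigenvector and the comparison principle for cooperative systems; the two are equivalent here.

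One small correction: since $d\ge 0$, the condition $\limsup_{t\to\infty}d(\Phi_t(x_0),E_0)>0$ is exactly equivalent to $\Phi_t(x_0)\not\to E_0$. Your final paragraph is therefore not needed for the lemma as stated, although it does correctly give the stronger uniform weak-repeller property.
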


\begin{proof}
    %\alert{[to be revised]}.
    It is enough to show that $W^s(E_0) \cap X_0 = \emptyset$ when $\mathcal{R}_c>1$, where $W^s(E_0)$ denotes the stable manifold of $E_0$. By contradiction, assume that there exists a solution $\Phi_t(x_0) \in X_0$ for $t \geq 0$ with initial value $x_0 \in X_0$ such that
    \[
    \lim_{t\to \infty} \Phi_t(x_0) = E_0.
    \]
    Namely, for any fixed $\varepsilon >0$, there exists $t_1 >0$ such that for all $t \geq t_1$,
    \begin{align}
S_0-\varepsilon<S(t)<S_0+\varepsilon,\qquad & 0<E(t)<\varepsilon,\quad 0<I(t)<\varepsilon, \label{eq:bounds1}\\
V_0-\varepsilon<V(t)<V_0+\varepsilon,\qquad & 0<C(t)<\varepsilon, \label{eq:bounds2}
\end{align}
    where $S_0$ and $V_0$ are the disease--free equilibrium coordinates given in equation \eqref{eq:DFE}.

    \medskip
\noindent\textbf{Case 1: $n\ge 2$.}
In this case, the function $g$ is such that $g'(0)=0$, and the control reproduction number $\Rc$ reduces to its direct transmission component, i.e., $\mathcal R_c=\mathcal
R_c^{\rm dir}$. Using the model equations \eqref{subeqE}--\eqref{subeqI}, the bounds $S(t)\ge S_0-\varepsilon$, $V(t)\ge V_0-\varepsilon$
and the fact that $g(C,\kappa)\ge0$, we obtain that for $t \geq t_1$ holds:
\begin{align}\label{eq:EI_compare_nge2}
\dot E(t) &\ge \left[\beta_1(S_0-\varepsilon)+\beta_2(V_0-\varepsilon)\right]\,I(t)-(\xi+\mu)E(t),\\
\dot I(t) &= \xi E(t)-(\gamma+d+\mu)I(t).
\end{align}
Let $y(t):=(E(t),I(t))^t$. System \eqref{eq:EI_compare_nge2} can be written in vector form as
\[
\dot y(t)\ge B_\varepsilon\,y(t),\qquad t\ge t_1,
\]
where
\[
B_\varepsilon=
\begin{bmatrix}
-(\xi+\mu) & \beta_1(S_0-\varepsilon)+\beta_2(V_0-\varepsilon)\\[1mm]
\xi & -(\gamma+d+\mu)
\end{bmatrix}.
\]
Since $\mathcal R_c=\mathcal R_c^{\rm dir}>1$, the disease--free equilibrium is unstable (see Proposition~\ref{thm: LAS DFE}), which is
equivalent to $s(B_0)>0$ for
\[
B_0=
\begin{bmatrix}
-(\xi+\mu) & \beta_1 S_0+\beta_2 V_0\\[1mm]
\xi & -(\gamma+d+\mu)
\end{bmatrix}.
\]
Since the spectrum of a matrix depends continuously on its entries \cite{li2019eigenvalue}, there exists $\varepsilon_0>0$ such that
$s(B_\varepsilon)>0$ for all $\varepsilon\in(0,\varepsilon_0]$.
Fix $\varepsilon\in(0,\varepsilon_0]$ and set $\lambda:=s(B_\varepsilon)>0$.
Since $B_\varepsilon$ is Metzler and irreducible, Perron--Frobenius theory yields a vector $v\gg0$ such that
$B_\varepsilon v=\lambda v$ (see, e.g., \cite[Corollary~3.2]{smith1995monotone}).
Choose
\[
\tilde\epsilon:=\min\Big\{\frac{E(t_1)}{v_1},\frac{I(t_1)}{v_2}\Big\}>0,
\qquad\text{so that}\qquad
\tilde\epsilon v\le y(t_1)\ \text{componentwise}.
\]
Let $z(t)$ be the solution of $\dot z=B_\varepsilon z$ with $z(t_1)=\tilde \epsilon v$. Since $\dot y\ge B_\varepsilon y$
and $\dot z=B_\varepsilon z$ is cooperative, the comparison principle gives $y(t)\ge z(t)$ for all $t\ge t_1$.
Moreover, $z(t)=\tilde\epsilon e^{\lambda(t-t_1)}v$, hence $z(t)\to\infty$ as $t\to\infty$, so $y(t)$ cannot
converge to $0$, contradicting $\Phi_t(x_0)\to E_0$.

\medskip
\noindent\textbf{Case 2: $n=1$.}
Here $g(C,\kappa)=C/(C+\kappa)$. Using \eqref{eq:bounds1}--\eqref{eq:bounds2}, for all $t\ge t_1$,
\[
g(C(t),\kappa)=\frac{C(t)}{C(t)+\kappa}\ \ge\ \frac{1}{\kappa+\varepsilon}\,C(t).
\]
Let $y(t):=(E(t),I(t),C(t))^t$. Using \eqref{subeqE}--\eqref{subeqC}, the bounds $S(t)\ge S_0-\varepsilon$,
$V(t)\ge V_0-\varepsilon$ and the above inequality for $g$, we obtain for $t\ge t_1$:
\[
\dot y(t)\ge \widetilde B_\varepsilon\,y(t),
\]
where
\[
\widetilde B_\varepsilon=
\begin{bmatrix}
-(\xi+\mu) &
\beta_1(S_0-\varepsilon)+\beta_2(V_0-\varepsilon) &
\dfrac{\alpha_1(S_0-\varepsilon)+\alpha_2(V_0-\varepsilon)}{\kappa+\varepsilon}\\[2mm]
\xi & -(\gamma+d+\mu) & 0\\
0 & \varphi & -\omega
\end{bmatrix}.
\]
The matrix $\widetilde B_\varepsilon$ is Metzler and irreducible.
Since $\mathcal R_c>1$, the disease--free equilibrium is unstable (Proposition~\ref{thm: LAS DFE}), which is equivalent to
$s(\widetilde B_0)>0$, where $\widetilde B_0=\widetilde B_\varepsilon|_{\varepsilon=0}$.
By continuity, there exists $\varepsilon_1>0$ such that $s(\widetilde B_\varepsilon)>0$ for all
$\varepsilon\in(0,\varepsilon_1]$. Fix such an $\varepsilon$ and set $\lambda:=s(\widetilde B_\varepsilon)>0$.
By Perron--Frobenius theory, there exists $v\gg0$ such that $\widetilde B_\varepsilon v=\lambda v$.
Choose
\[
\tilde\epsilon:=\min\Big\{\frac{E(t_1)}{v_1},\frac{I(t_1)}{v_2},\frac{C(t_1)}{v_3}\Big\}>0,
\qquad\text{so that}\qquad
\tilde \epsilon v\le y(t_1).
\]
Let $z(t)$ solve $\dot z=\widetilde B_\varepsilon z$ with $z(t_1)=\tilde \epsilon v$.
By the comparison principle for cooperative systems, $y(t)\ge z(t)$ for all $t\ge t_1$.
Since $z(t)=\tilde \epsilon e^{\lambda(t-t_1)}v$, we have $z(t)\to\infty$ as $t\to\infty$, hence $y(t)$ cannot
converge to $0$, again contradicting $\Phi_t(x_0)\to E_0$.

\medskip
In both cases, we obtain a contradiction. Therefore $W^s(E_0)\cap X_0=\emptyset$, and $E_0$ is a weak
repeller for $X_0$.

\end{proof}

\begin{thm}
Assume that $\mathcal{R}_c>1$. Then system~\eqref{eq:sys} is uniformly persistent, namely, there exists a constant
$\delta>0$ such that every solution
\(
\Phi_t(x_0)=\bigl(S(t),E(t),I(t),V(t),C(t)\bigr)
\)
of~\eqref{eq:sys} satisfies
\[
\liminf_{t\to\infty} S(t)\ge \delta,\quad
\liminf_{t\to\infty} E(t)\ge \delta,\quad
\liminf_{t\to\infty} I(t)\ge \delta,\quad
\liminf_{t\to\infty} V(t)\ge \delta,\quad
\liminf_{t\to\infty} C(t)\ge \delta,\quad
\]
for every initial condition
\[
x_0=\bigl(S(0),E(0),I(0),V(0),C(0)\bigr)\in \bar{\Omega}
\quad\text{with}\quad E(0)+I(0) + C(0)>0.
\]
Moreover, system~\eqref{eq:sys} admits at least one endemic equilibrium.
\end{thm}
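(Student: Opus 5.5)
The plan is to apply the abstract persistence theorem of Thieme~\cite{thieme1993persistence}, in the form used by Zhao and Smith. It is applied to the semiflow $\Phi$ on $\bar\Omega$ with the splitting $\bar\Omega = X_0\cup\partial X_0$ of Lemma~\ref{lem:Mpartial_equals_Sigma}.

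\textbf{Step 1: standing hypotheses.} First I check the structural requirements.
\begin{itemize}
\item $\Phi$ is a continuous semiflow on the compact, positively invariant set $\bar\Omega$, by Proposition~\ref{thm: invariance Omega}. It is therefore point dissipative and eventually compact, and it has a compact global attractor.
\item $X_0$ is relatively open in $\bar\Omega$.
\item $X_0$ is forward invariant: an orbit starting in $X_0$ stays positive in $E,I,C$, by the comparison estimates used in Case~1 of Lemma~\ref{lem:Mpartial_equals_Sigma}.
\item An initial datum with $E(0)+I(0)+C(0)>0$ enters $X_0$ at some time $t^*>0$. This is exactly what the case analysis of Lemma~\ref{lem:Mpartial_equals_Sigma} shows. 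So it suffices to prove persistence for $x_0\in X_0$.
\end{itemize}

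\textbf{Step 2: boundary dynamics.} By Lemma~\ref{lem:Mpartial_equals_Sigma}, $M_\partial=\Sigma$. On $\Sigma$ the flow reduces to the linear-affine $(S,V)$ system~\eqref{eq: SVR system}. That system is globally asymptotically stable at $(S_0,V_0)$, as shown in the proof of the global stability theorem. Hence $\bigcup_{x\in M_\partial}\omega(x)=\{E_0\}$. The covering $\{E_0\}$ is isolated and acyclic, since a single equilibrium cannot be chained to itself by a homoclinic orbit inside $M_\partial$.

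\textbf{Step 3: repulsion of $E_0$.} I need $W^s(E_0)\cap X_0=\emptyset$, which is exactly Lemma~\ref{lemma: E0 weak repeller for X0}. Because $\{E_0\}$ is isolated in $\bar\Omega$, the weak repeller property is what the acyclicity theorem requires (Thieme, Thm.~4.1 / Hale–Waltman). The conclusion is uniform persistence with respect to the distance from the boundary. Namely, there is $\eta>0$ with $\liminf_{t\to\infty} d(\Phi_t(x_0),\partial X_0)\ge\eta$, so $\liminf E,\liminf I,\liminf C\ge \eta$ for every $x_0\in X_0$.

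\textbf{Step 4: bounds for $S$ and $V$.} For $S$, Proposition~\ref{lemma: point dissipativity} already gives $\liminf S\ge\delta$ with $\delta$ as in~\eqref{eq:delta}. For $V$, equation~\eqref{subeqV} together with $I\le\Lambda/\mu$ and $g\le1$ gives $\dot V\ge \sigma S-(\beta_2\Lambda/\mu+\eta+\mu+\alpha_2)V$. Hence
\[
\liminf V\ge \sigma\delta/(\beta_2\Lambda/\mu+\eta+\mu+\alpha_2).
\]
Taking the minimum of all these constants gives the uniform $\delta$ of the statement.

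\textbf{Step 5: endemic equilibrium.} This follows from Zhao's result (or Magal–Zhao, Thm.~2.4) for point-dissipative, uniformly persistent semiflows. Those semiflows possess a compact attractor $A_0\subset X_0$ for $X_0$. Since $\bar\Omega$ is convex and $X_0$ is convex and relatively open, a fixed point exists in $X_0$, and this fixed point is an equilibrium with $E,I,C>0$.

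\textbf{Main obstacle.} The delicate point is matching hypotheses precisely. First, $E_0$ must be isolated as an invariant set in $\bar\Omega$. I expect to get this from the strict inequality $\liminf_t d(\Phi_t(x),E_0)\ge\epsilon$ uniformly in $x$, strengthening Lemma~\ref{lemma: E0 weak repeller for X0} by the same comparison argument in a fixed $\varepsilon$-neighbourhood. Second, the existence of an equilibrium needs convexity of $X_0$ and compactness of the attractor. I would check these directly from the explicit description of $\bar\Omega$.
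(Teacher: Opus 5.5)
Your proposal follows the paper's proof essentially step for step: lower bounds for $S$ and $V$ from Proposition~\ref{lemma: point dissipativity} and comparison, reduction to $X_0$ via Lemma~\ref{lem:Mpartial_equals_Sigma}, convergence of the boundary dynamics on $\Sigma$ to $E_0$, Lemma~\ref{lemma: E0 weak repeller for X0} combined with Thieme's acyclicity theorem, and Zhao's fixed-point result for the endemic equilibrium. One small correction to your ``main obstacle'' remark: the comparison argument in a fixed $\varepsilon$-neighbourhood gives $\limsup_{t\to\infty} d(\Phi_t(x),E_0)\ge\varepsilon$ for every $x\in X_0$, not a $\liminf$ bound, and this, together with the fact that the only entire orbit of the linear $(S,V)$ system on $\Sigma$ staying near $(S_0,V_0)$ is the equilibrium itself, is what proves that $\{E_0\}$ is isolated in $\bar\Omega$ --- a point the paper only asserts.
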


\begin{proof}
By Proposition~\ref{lemma: point dissipativity}, there exist a constant $\delta_S>0$ (given in
\eqref{eq:delta}) and a time $t_{S(0)}>0$ such that
\[
S(t)\ge \delta_S \qquad \text{for all } t\ge t_{S(0)},
\]
where $\delta_S$ is independent of the initial condition. In particular,
$\liminf_{t\to\infty}S(t)\ge \delta_S$. Moreover, since  $I(t)\le \Lambda/\mu$ and $0\le g(C,\kappa)\le 1$, we obtain from equation \eqref{subeqV} that
\begin{align*}
    \dot V(t)
=& \, \sigma S(t)-\Bigl(\eta+\mu+\beta_2 I(t)+\alpha_2 g(C(t),\kappa)\Bigr)V(t) \\
 \ge &\,  \sigma\delta_S-\Bigl(\eta+\mu+\beta_2\frac{\Lambda}{\mu}+\alpha_2\Bigr)V(t), \qquad t\ge t_{S(0)}.
\end{align*}

\noindent By comparison with the linear equation $\dot y=\sigma\delta_S-k y$, where
$k:=\eta+\mu+\beta_2\dfrac{\Lambda}{\mu}+\alpha_2$, it follows that
\[
\liminf_{t\to\infty}V(t)\ge \delta_V:=\frac{\sigma\delta_S}{\eta+\mu+\beta_2\dfrac{\Lambda}{\mu}+\alpha_2}>0.
\]

\medskip
\noindent
Hence, it remains to prove that $E(t)$, $I(t)$ and $C(t)$ are uniformly bounded away from zero
whenever $E(0)+I(0)+C(0)>0$. Let
\begin{align*}
    X_0 &= \left\{ (S, E, I, V,C) \in \bar{\Omega} \,: \,E >0, \,I>0,\,  C >0 \right\},\\
    \partial X_0 &= \left\{ (S, E, I, V,C) \in \bar{\Omega} \,: \,EIC=0 \right\}.
\end{align*}
It suffices to show that system~\eqref{eq:sys} is uniformly persistent with respect to
$(X_0,\partial X_0)$, i.e., there exists $\delta^*>0$ such that
\[
\liminf_{t\to\infty} d\bigl(\Phi_t(x_0),\partial X_0\bigr)\ge \delta^*,
\qquad \text{for all } x_0\in X_0.
\]

Indeed, if $x_0\in X_0$, then the claim is exactly the one established above. If instead $E(0)+I(0)+C(0)>0$ but $x_0\notin X_0$, then necessarily
$x_0\in \partial X_0\setminus\Sigma$, where $\Sigma$ is defined in Lemma~\ref{lem:Mpartial_equals_Sigma}. Hence, by Lemma~\ref{lem:Mpartial_equals_Sigma}, there exists $t_*>0$ such that $\Phi_{t_*}(x_0)\in X_0$. Therefore, once uniform persistence with respect to $(X_0,\partial X_0)$ has been established, the same asymptotic conclusion follows for the trajectory starting from $\Phi_{t_*}(x_0)$, and hence for the original trajectory starting from $x_0$ by the semiflow property. Thus, the result extends to every initial condition satisfying $E(0)+I(0)+C(0)>0$.

To apply the persistence theorem, it remains to characterise the invariant part of the boundary
$\partial X_0$. First, we note that $\partial X_0$ is a compact subset of $\bar{\Omega}$ and that
$X_0$ is positively invariant. We now recall the sets $M_{\partial}$ and $\Sigma$ introduced in
Lemma~\ref{lem:Mpartial_equals_Sigma}:
\[
M_{\partial}
:=\Bigl\{x_0\in \partial X_0:\ \Phi_t(x_0)\in \partial X_0 \ \text{for all } t\ge 0\Bigr\},
\qquad
\Sigma:=\Bigl\{x_0\in \bar{\Omega}:\ E=I=C=0\Bigr\}.
\]
By Lemma~\ref{lem:Mpartial_equals_Sigma}, we have $M_{\partial}=\Sigma$. Restricting
system~\eqref{eq:sys} to $\Sigma$ yields the disease--free subsystem
\begin{equation}\label{eq:DF_subsystem}
\begin{cases}
\dot S=\Lambda-\sigma S+(1-p)\eta V-\mu S,\\[2mm]
\dot V=\sigma S-(\eta+\mu)V,
\end{cases}
\end{equation}
whose unique equilibrium is $(S_0,V_0)$. Since \eqref{eq:DF_subsystem} is a linear--affine system and
$(S_0,V_0)$ is locally asymptotically stable, it is also globally asymptotically stable on $\Sigma$.
Hence,
\[
\Omega_2:=\bigcup_{y\in M_{\partial}}\omega(y)=\{E_0\},
\]
where $E_0=(S_0,0,0,V_0,0)$ is the disease--free equilibrium of~\eqref{eq:sys}.
In particular, $\{E_0\}$ is a compact isolated invariant set (since it is the unique equilibrium on $M_{\partial}$ and it is globally stable) and provides an acyclic isolated covering of
$\Omega_2$ (since there exists no solution on $M_{\partial}$ linking $E_0$ to itself). Finally, by Lemma~\ref{lemma: E0 weak repeller for X0}, $E_0$ is a weak repeller for $X_0$.
Therefore, all assumptions of Theorem~4.5 in~\cite{thieme1993persistence} are satisfied, and $\partial X_0$ is a uniform strong repeller for $X_0$.
Hence, there exists $\delta^*>0$ such that
\[
\liminf_{t\to\infty} d\bigl(\Phi_t(x_0),\partial X_0\bigr)\ge \delta^* \qquad \forall\,x_0\in X_0.
\]
This implies that $E(t)$, $I(t)$ and $C(t)$ are uniformly bounded away from zero for all sufficiently large
times, and in particular
\[
\liminf_{t\to\infty}E(t)\ge \delta^*,\qquad
\liminf_{t\to\infty}I(t)\ge \delta^*,\qquad
\liminf_{t\to\infty}C(t)\ge \delta^*.
\]

\medskip
\noindent
Setting $\delta:=\min\{\delta_S,\delta_V,\delta^*\}>0$ yields the desired uniform persistence estimate for all
components.

Finally, since the semiflow generated by \eqref{eq:sys} on $X=\bar\Omega$ is point dissipative
(Proposition~\ref{lemma: point dissipativity}) and uniformly persistent with respect to
$(X_0,\partial X_0)$, Theorem~2.4 in~\cite{zhao1995uniform} implies the existence of an equilibrium $\bar{x}\in X_0$. Therefore, system~\eqref{eq:sys}
admits at least one endemic equilibrium when $\mathcal{R}_c >1$.
\end{proof}

\section{Concluding remarks}
\label{sec:conclusions}
In this work, we have analysed the global dynamics of an epidemic model incorporating demographic turnover, vaccination with delayed immune maturation, and a saturating fomite--mediated transmission pathway. The environmental component is modelled through a bounded Holling-type functional response, allowing us to distinguish between linear (type~II) and higher-order (type~III) dose--response relationships. The present work complements the local and bifurcation analysis in~\cite{gokcce2024dynamics}
by providing the following global results and threshold characterisations:

\begin{itemize}
\item For a Holling type~II dose--response function, an explicit sufficient condition $\mathcal{J}_c\leq 1$ is derived for the global asymptotic stability of the disease-free equilibrium, by applying the Kamgang--Sallet approach for monotone systems with a Metzler infected subsystem. The threshold $\mathcal{J}_c$ is expressed in closed form and yields a transparent upper bound on infection growth within the invariant region.

\item In the absence of vaccination,  thresholds  $\mathcal{R}_0$ and $\mathcal{J}_c$
coincide, recovering the sharp threshold $\mathcal R_0\le 1$ for the global asymptotic stability of the disease-free equilibrium.

\item When $\mathcal{R}_c>1$, uniform persistence is established by using persistence theory for semiflows with an acyclicity analysis of the boundary dynamics, implying that all infected components remain uniformly bounded away from zero for large times.
\end{itemize}

From an epidemiological perspective, our results quantify the combined impact of vaccination and environmental contamination. Vaccination affects transmission both directly, by reducing susceptibility, and indirectly, by decreasing environmental pathogen shedding through reductions in infectious prevalence. %\cite{chakroborty2026seirv, pathak2025mathematical}.
In particular, the vaccination rate $\sigma$ and the effectiveness parameter $p$ influence the disease--free equilibrium values $S_0$ and $V_0$, and therefore enter the control number $\mathcal R_c$, yielding local stability depending on them. However, the global threshold $\mathcal J_c$ we obtained does not depend on these values. Nevertheless, an increase in vaccination coverage reduces the pool of susceptible individuals and can push the system below the extinction threshold. %\cite{anderson1991infectious,diekmann2010construction}.
The saturation in environmental transmission prevents unrealistically large infection pressure at high contamination levels and may significantly alter the role of fomites in sustaining transmission \cite{liu1986influence,martcheva2015introduction}.
In particular, when environmental transmission is linear at low contamination levels ($n=1$), it directly influences invasion and global stability thresholds.
For higher-order responses ($n \ge 2$), environmental transmission does not affect the invasion threshold but can still shape nonlinear and global dynamics.
%%% \noindent\textbf{Sensitivity considerations.}
The explicit expressions of $\mathcal R_c$ and $\mathcal J_c$ allow for qualitative sensitivity insights.
Parameters that increase the pathogen shedding $\varphi$, decrease the environmental decay $\omega$, or increase direct transmission rates $\beta_1$ and $\beta_2$ enlarge the control reproduction number and therefore enlarge the persistence region.
Conversely, increasing the vaccination rate $\sigma$, the vaccine effectiveness $p$, or the recovery rate $\gamma$ reduces the effective control reproduction number.
In particular, the environmental parameters $\varphi$, $\omega$ and $\kappa$ only affect invasion when $n=1$, but they influence the global dynamics for all $n$, highlighting that control strategies targeting environmental decontamination may be more critical \cite{tien2010multiple,li2009dynamics}. From a mathematical standpoint, the combination of a nonlinear environmental incidence function with vaccination-induced transitions leads to a nontrivial threshold structure in which invasion, extinction, and global stability are governed by distinct but related quantities. The explicit comparison between $\mathcal R_c$ and $\mathcal J_c$ highlights the gap between local and global thresholds in systems with bounded nonlinear feedback and imperfect vaccination.

We acknowledge that this work has some limitations, which nonetheless leave open the possibility of future developments. One direction is the study of backward bifurcation phenomena in the higher-order incidence case. Another direction is the incorporation of waning immunity or booster vaccination strategies, as well as the investigation of time-dependent or seasonally varying parameters. Indeed, several pathogens that can be transmitted via contaminated surfaces exhibit marked seasonality, including both enteric and respiratory ones (e.g., norovirus, influenza, and respiratory syncytial virus) \cite{CDC_NorovirusFactsStats_2024,CDC_FluSeason_2025,CDC_RSVClinicalOverview_2025}. Numerical exploration of parameter sensitivity and quantitative assessment of the relative contribution of environmental transmission would further complement the analytical results.
Overall, the analysis presented here provides rigorous extinction and persistence criteria for epidemic models combining vaccination and saturating fomite--mediated transmission, contributing to the theoretical understanding of environmentally mediated infectious disease dynamics.

\paragraph*{Acknowledgements} This work has been carried out under the auspices of the Italian National Group for Mathematical Physics (GNFM) of the National Institute for Advanced Mathematics (INdAM). E. P. would like to thank Prof. Bruno Buonomo for the fruitful discussions and suggestions during the writing of this paper. B. G. and U. F. gratefully acknowledge financial support from the Robert Bosch Stiftung through the Partnership Scholarship Programme between Johannes Gutenberg University Mainz and the University of Warsaw, which enabled the research collaboration leading to this work.

\paragraph*{Data availability statement} Data sharing not applicable to this article as no datasets were generated or
analysed during the current study.

\section*{Compliance with ethical standards}

\paragraph*{Conflict of interest}
The authors state that there is no conflict of interest.

\appendix

% se non l'hai già nel preambolo:
\numberwithin{equation}{section}
\numberwithin{figure}{section}
\numberwithin{table}{section}

% --- Appendix A ---
\setcounter{section}{1}   % => A
\setcounter{equation}{0}  % riparte da (A.1)
\section*{Appendix}\addcontentsline{toc}{section}{Appendix A}
\subsection*{A. Proof of Proposition \ref{thm: invariance Omega}.}
By standard procedure (see e.g.~\cite{buonomo2025two}), one can derive the positive invariance of the nonnegative cone and that $S(t) >0$ for $t >0$, for any initial data $\mathbf{x}_0 \in \mathbb{R}_+^5$. Summing the balance equations for the compartments $S$, $E$, $I$, $V$ and $R$ gives
\[
\dot N(t)=\Lambda-\mu N(t)-d\,I(t)\ \le\ \Lambda-\mu N(t).
\]
Comparison with $\dot y=\Lambda-\mu y$ yields
\begin{equation*}
N(t)\le N(0)e^{-\mu t}+\frac{\Lambda}{\mu}\left(1-e^{-\mu t}\right),\qquad t\ge 0,
\end{equation*}
hence
\begin{equation*}
\limsup_{t\to\infty}N(t)\le \frac{\Lambda}{\mu}
\quad\text{and}\quad
N(t)\le \max\!\left\{N(0),\,\frac{\Lambda}{\mu}\right\}.
\end{equation*}
Analogously, from $\dot C \leq \varphi N-\omega C$, one can get
\begin{equation*}
\limsup_{t\to\infty}C(t)\le \frac{\varphi \Lambda}{\omega \mu}
\quad\text{and}\quad C(t)\le \max\!\left\{C(0),\,\frac{\varphi \Lambda}{\omega \mu}\right\}.
\end{equation*}
\subsection*{B. Global DFE stability via the Kamgang and Sallet approach}

 \begin{lemma}[Kamgang and Sallet, 2008] \label{thm: kamgang sallet}
        Consider the model in \cref{eq:model_matrices} and suppose that the following assumptions hold:
        \begin{enumerate}[label = \textbf{A\arabic*:}]
        \label{hyp A1}
        \item The model is positively invariant and point dissipative \cite{hale2010asymptotic} on $\Omega$, i.e., there exists a compact set $K \subseteq \Omega$ such that for every $y \in \Omega$, there exists a time $t(y)$ for which $x(t, 0, y) \in \mathring{K}$ for every $t \geq t(y)$.
        \label{hyp A2}
        \item The sub--system $\dot{x}_1 = A_1(x_1^*,0)\cdot (x_1- x_1^*)$ is globally asymptotically stable at the equilibrium $x_1^*$ on $\Omega \cap (\R^{n_1}_+ \times \{0\})$.
         \label{hyp A3}
        \item The matrix $A_2(x)$ is Metzler and irreducible for every $x \in \Omega$.
         \label{hyp A4}
        \item There exist an upper--bound matrix $\Bar{A}_2$ for $\mathcal{M} = \{ A_2(x)\, |\, x \in \Omega \}$ with the property that either $\Bar{A}_2 \notin \mathcal{M}$ or if $\Bar{A}_2 \in \mathcal{M}$, then for any $\Bar{x} \in \Omega$ such that $A_2(\Bar{x}) = \Bar{A}_2$, it holds that $\Bar{x} \in \R^{n_1}_+ \times \{0\}$.
         \label{hyp A5}
        \item The stability modulus $s(\Bar{A}_2) = \max\{ \mathfrak{Re}(\lambda)\, |\, \lambda \in \sigma (\bar{A}_2) \}$ (i.e., the greatest real part of the eigenvalues) of $\Bar{A}_2$ is non--positive.
        \end{enumerate}
        In these hypotheses, the disease--free equilibrium $x^*$ of the system \eqref{eq:model_matrices} is globally asymptotically stable in $\Bar{\Omega}$.
    \end{lemma}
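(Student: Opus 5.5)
The plan is a Lyapunov--LaSalle argument built on a linear functional of the infected variables. The weight vector is the left Perron vector of the upper-bound matrix $\bar A_2$. It is combined with a comparison argument and with the dynamics on the disease-free face $\R^{n_1}_+\times\{0\}$.

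First, the Perron vector. Since $A_2(x)\le \bar A_2$ entrywise for all $x\in\Omega$ and each $A_2(x)$ is Metzler and irreducible (\textbf{A3}), $\bar A_2$ is also Metzler and irreducible: its off-diagonal entries dominate those of an irreducible matrix. By Perron--Frobenius theory for irreducible Metzler matrices (e.g.\ \cite[Corollary~3.2]{smith1995monotone}), $s(\bar A_2)$ is a real eigenvalue with a left eigenvector $c\gg 0$, so $c^{T}\bar A_2=s(\bar A_2)\,c^{T}$.

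Next, the Lyapunov function. I would set $\mathcal V(x)=c^{T}x_2\ge 0$ on $\Omega$. Along solutions, using $x_2\ge 0$ and $A_2(x)\le\bar A_2$,
\[
\dot{\mathcal V}=c^{T}A_2(x)\,x_2\le c^{T}\bar A_2\,x_2=s(\bar A_2)\,c^{T}x_2\le 0
\]
by \textbf{A5}. Positive invariance and point dissipativity (\textbf{A1}) give precompact orbits with $\omega$-limit sets in the compact set $K$, so LaSalle's invariance principle applies. Every $\omega$-limit set then lies in the largest invariant subset $\mathcal L$ of $\{\dot{\mathcal V}=0\}$. That set is contained in
\[
\Big\{x:\ c^{T}\big(\bar A_2-A_2(x)\big)x_2=0\Big\}\cap\Big\{x:\ s(\bar A_2)\,c^{T}x_2=0\Big\}.
\]
If $s(\bar A_2)<0$, then $\mathcal L$ lies in $\{x_2=0\}$ directly.

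The critical case $s(\bar A_2)=0$ is the main obstacle. Suppose an entire orbit in $\mathcal L$ has $x_2(t_0)\ne0$ at some $t_0$. Because $A_2(x)$ is Metzler and irreducible along the orbit, I would show $x_2(t)\gg 0$ for $t>t_0$. The argument is the same propagation used in Lemma~\ref{lem:Mpartial_equals_Sigma}: a positive component feeds every component reachable in the strongly connected graph of $A_2$. Then $c\gg0$, $x_2\gg0$ and $\bar A_2-A_2(x)\ge0$ force $A_2(x(t))=\bar A_2$. By \textbf{A4} this means $x(t)\in\R^{n_1}_+\times\{0\}$, contradicting $x_2\gg0$. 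Hence $\mathcal L\subseteq\Omega\cap(\R^{n_1}_+\times\{0\})$. The delicate points are ensuring that irreducibility yields strict positivity on the whole invariant orbit, and handling orbits in $\mathcal L$ that are backward-complete.

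Finally, on the invariant set $\mathcal L$ the flow is $\dot x_1=A_1(x_1,0)(x_1-x_1^*)$. An invariant compact subset of a system globally asymptotically stable at $x_1^*$ (\textbf{A2}) must be $\{x^*\}$, since full orbits in it have $\alpha$- and $\omega$-limits at $x_1^*$. So every trajectory converges to $x^*$, which gives global attractivity. For Lyapunov stability, $\mathcal V$ is nonincreasing, so $x_2$ stays uniformly small when it starts small. The $x_1$-equation is then a small perturbation of the globally asymptotically stable disease-free subsystem, and continuous dependence plus local exponential stability of $x_1^*$ keeps $x_1$ near $x_1^*$. Attractivity together with stability yields global asymptotic stability of $x^*$ in $\bar\Omega$.
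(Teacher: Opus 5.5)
The paper does not prove this lemma; it quotes it from Kamgang--Sallet (Theorem~4.3), and your proposal is correct and is essentially their argument. That argument uses the linear Lyapunov function $c^{T}x_2$ weighted by the left Perron vector of $\bar A_2$, LaSalle's principle, irreducibility plus \textbf{A4} to exclude $x_2\neq0$ when $s(\bar A_2)=0$, and \textbf{A2} on the disease-free face. The positivity step you flag as delicate has a shortcut: on the invariant set $(\bar A_2-A_2(x))x_2=0$, so $x_2(t)=e^{(t-t_0)\bar A_2}x_2(t_0)\gg0$ directly.

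One caveat concerns \textbf{A2}. You rightly read it as global stability of the nonlinear face dynamics $\dot x_1=A_1(x_1,0)(x_1-x_1^*)$, as in the original reference; with the frozen matrix $A_1(x_1^*,0)$ as printed, the conclusion can fail. Under your reading, however, $x_1^*$ need not be exponentially stable. Your final stability step should therefore invoke total stability of asymptotically stable equilibria (via a converse Lyapunov function) instead of local exponential stability.
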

\bibliographystyle{abbrv}
\bibliography{eub}
%%\end{linenumbers}
\end{document}